\newtheorem{theorem}{Theorem}
\newtheorem{corollary}[theorem]{Corollary}
\newtheorem{proposition}[theorem]{Proposition}
\newenvironment{proof}[1][Proof]{\noindent\textbf{#1.} }{\ \rule{0.5em}{0.5em}}
\begin{document}

\title{Inversion of the spherical means transform in corner-like domains by reduction
to the classical Radon transform}
\author{Leonid Kunyansky}
\maketitle

\begin{abstract}
We consider an inverse problem arising in thermo-/photo- acoustic tomography
that amounts to reconstructing a function $f$ from its circular or spherical
means with the centers lying on a given measurement surface. (Equivalently,
these means can be expressed through the solution $u(t,x)$ of the wave
equation with the initial pressure equal to $f$.) An explicit solution of this
inverse problem is obtained in 3D for the surface that is the boundary of an
open octet, and in 2D for the case when the centers of integration circles lie
on two rays starting at the origin and intersecting at the angle equal to
$\pi/N$, $N=2,3,4,...$. Our formulas reconstruct the Radon projections of a
function closely related to $f$, from the values of $u(t,x)$ on the
measurement surface. Then, function $f$ can be found by inverting the Radon transform.
\end{abstract}

\textit{Keywords}: photoacoustic tomography, thermoacoustic tomography,
optoacoustic tomography, wave equation, spherical means, explicit inversion formulas

\section{Introduction}

Thermoacoustic tomography (TAT)~\cite{KrugerTAT,WangCRC} and photo- or opto-
acoustic tomography (PAT or OAT)~\cite{KrugerPAT,Oraev94,Beard2011} are based
on the thermoacoustic effect: when a material is heated it expands. To perform
measurements, a biological object is illuminated with a short electromagnetic
pulse that heats the tissue and, through the resulting thermoacoustic
expansion, generates an outgoing acoustic wave. Acoustic pressure is then
measured by detectors placed on a surface (completely or partially)
surrounding the object. The source reconstruction inverse problem of TAT/PAT/OAT\ consists in
finding the initial pressure within the object from the measurements.
This pressure is closely related to the blood content in the tissues, making
these imaging modalities particularly effective for cancer detection and
imaging of blood vessels.

Under the assumption that the speed of sound in the tissues is constant, for
certain simple acquisition surfaces a solution of this inverse problem can be
represented by explicit closed-form formulas. Such formulas are similar to the
well-known filtration/backprojection formula in X-ray tomography; they allow
one to compute the initial pressure by evaluating an explicit
integro-differential operator at each node of a reconstruction grid. The
existence and form of explicit inversion formulas are closely related to the
shape of the data acquisition surface. For the simplest case of a planar
surface, explicit formulas have been known for several decades
\cite{Anders,Fawcett,Den}. The authors of \cite{MXW2} found a
filtration/backprojection formula that is valid for a plane, a 3D sphere and
an infinite 3D cylinder. In~\cite{Finch04,Finch07,Kunyansky,Nguyen} several
different inversion formulas were derived for spherical acquisition surfaces
in spaces of various dimensions. In \cite{Kun-cube} explicit reconstruction
formulas were obtained for detectors placed on a surface of a cube (or square,
in 2D) or on surfaces of certain other polygons and polyhedra. Several authors
\cite{Salman,Pala,Nat12,Haltm-ellipse} recently found explicit inversion
formulas for the data measured from surfaces of an ellipse (in 2D) or an
ellipsoid (in 3D) surrounding the object. These formulas can be further
extended by continuity to an elliptic paraboloid or parabolic cylinder
\cite{HP1,HP2}. Certain more complicated polynomial surfaces (including a
paraboloid) were considered in \cite{Pala}, although not all of these surfaces
are attractive from a practical point of view, as they would require
surrounding the object by several layers of detectors. In addition to the
explicit formulas, there exist several reconstruction algorithms (for closed
acquisition surfaces) based on various series expansions
~\cite{Norton1,Norton2,Kun-ser,Kun-cyl,Haltm-series}. These techniques
sometimes lead to very fast implementations (e.g. ~\cite{Kun-ser,Kun-cyl});
however, their efficient numerical implementation may require certain
non-trivial computational skills.

In this paper we derive inversion formulas for acquisition schemes that use
either linear arrays of point- or line- detectors, or two dimensional arrays
of sensors partially surrounding the object. Several experimental setups
motivate our research. For example, in \cite{burg-exac-appro}, linear
detectors were arranged in a corner-like assembly rotated along the object.
Such a scheme requires inversion of series of 2D circular Radon transforms
with centers located on two perpendicular rays. A similar 3D problem sometime
arises when optically scanned planar glass plate is used as an acoustic
detector, as done by researchers from University College London (see, for
example,~\cite{Cox2007,Ell-Cox-apparatus}). The use of a single glass plate
provides only a limited angle view of the object, leading to disappearance of
some edges in the image. To eliminate such artifacts, one needs to repeat
scanning with a detector repositioned to view the object at a different angle,
or to add additional glass plates and/or acoustical mirrors. The methods of
the present paper are applicable in the case of consecutive measurements. The
acquisition scheme with multiple glass detectors may need to be modeled as a
reverberating cavity due to multiple reflections of acoustic waves from the
glass surfaces. The methods of the present paper are not applicable in such
situation; instead, the reader is referred
to~\cite{Cox2007,Ell-Cox-apparatus,Cox,HolKun}.

Recently, linear arrays of point-like piezoelectric detectors started to gain
popularity due to their fast rate of acquisition
(see~\cite{Kroger-linear,Ma,Wang-mirrors}). Similarly to planar arrays, one
such linear assembly yields only limited-angle measurements, and one has to
either use mirrors (as is done in~\cite{Wang-mirrors}), or to arrange several
arrays around the object (reflections from linear arrays can be neglected due
to their smaller cross-section).

In this paper we consider a 3D acquisition scheme where point detectors are
located on the boundary of an infinite octant, and a similar 2D problem with
detectors covering the boundary of an infinite angular sector with an opening
angle $\pi/N$, $N=2,3,4,...$. In both cases it is assumed that reflection of
acoustic waves from the detectors is either absent or negligible. Unlike many
of inversion formulas mentioned above, the formulas we derive are not of a
filtration/backprojection type. Instead, we obtain explicit expressions
allowing one to recover from the measurements the classical Radon projections
of the unknown initial pressure. The latter function then is easily
reconstructed by application of the well-known inverse Radon transform.

\begin{figure}[t]
\begin{center}
\includegraphics[width=4in,height=2.12in]{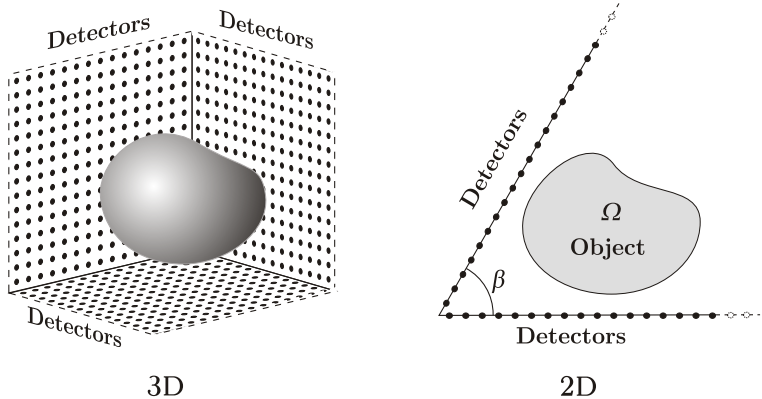}
\end{center}
\caption{Acquisition geometry in 3D and 2D}%
\end{figure}

\section{Preliminaries}

\subsection{Formulation of the problem\label{S:formulation}}

We assume throughout the paper that the attenuation of acoustic waves is
negligible, and that the speed of sound in the tissues is constant (and, thus,
it can be set to 1 without loss of generality). This simplified model is
acceptable in such important applications as breast imaging, and most of the
formulas and algorithms mentioned in the Introduction are based on these
assumptions. Then, the acoustic pressure $p(t,x)$ solves the following initial
problem for the wave equation in the whole space $\mathbb{R}^{d},$ $d=3,$%
\begin{equation}
\left\{
\begin{array}
[c]{c}%
p_{tt}=\Delta p,\text{ }x\in\mathbb{R}^{d},\quad t\in\lbrack0,\infty),\\
p(0,x)=f(x),\\
p_{t}(0,x)=0.
\end{array}
\right.  \label{E:IVP}%
\end{equation}
were the initial pressure of the acoustic wave $f(x)$ is the function we seek.
It is assumed that $f(x)$ is finitely supported within the region of interest
$\Omega,$ and that the point-wise measurements of $p(t,x)$ are done outside
$\Omega.$ A very similar two-dimensional problem arises when linear
integrating detectors are used for measurements (e.g., see \cite{Haltm-series}%
). For example, if a set of linear detectors parallel to vector $e_{3}%
=(0,0,1)$ is utilized, they measure integrals $I(t,x_{1},x_{2})$ of the
pressure along lines parallel to $e_{3}$ and the problem consists in finding
the initial distribution of these integrals $f(x)\equiv I(0,x_{1},x_{2})$.
This problem can also be formulated in the form (\ref{E:IVP}), with $d=2,$
with $p(t,x)$ replaced by $I(t,x_{1},x_{2}).$

Our goal is to reconstruct $f(x)$ from measurements of $p(t,x)$ made at all
points of the measuring surface (or, in 2D, measuring curve)$.$ In 3D, as the
measuring surface we will use the boundary $\partial Q$ of the first Cartesian
octet $Q$ in $\mathbb{R}^{3}$:%
\begin{equation}
Q\equiv\{x\in\mathbb{R}^{3}|x_{1}>0,x_{2}>0,x_{3}>0\}. \label{E:Q-3D}%
\end{equation}
For the 2D problem the data $u(t,x)$ will be assumed known on the boundary of
infinite angular segment $Q$ contained between the ray $\mathbb{R}^{+}%
\equiv\{(c,0)|\ c\geq0\}$ and ray $\mathbb{R}_{\beta}^{+}\equiv\{(c\cos
\beta,c\sin\beta)|\ c\geq0\}$, where angle $\beta$ is to one of the values
$\pi/N,$ $N=2,3,4,...$ $.$ In this case the measuring curve $\partial Q$ is
the union $\mathbb{R}^{+}\cup\mathbb{R}_{\beta}^{+}$. Figure~1 shows
schematically the measuring sets in 2D and 3D. In both cases it is assumed
that the support $\Omega$ of $f$ is properly contained in $Q.$

Thus, our ultimate goal is to find explicit reconstruction formulas expressing
$f(x)$ through measured values $p(t,y),\quad y\in\partial Q,\quad t\in
\lbrack0,\infty).$

\subsection{Outline of the approach\label{S:outline}}

Solution of the initial value problem (\ref{E:IVP}) can be written as a
convolution of $f$ with the time derivative of the retarded (or causal)
Green's function $G^{+}(t,x)$ of the wave equation in free space
\cite{Vladimirov}:
\begin{equation}
p(t,y)=\frac{\partial}{\partial t}\int\limits_{\Omega}f(x)G^{+}(t,y-x)dx.
\label{E:Green-convolution}%
\end{equation}
In 3D the above-mentioned Green's function takes form%
\begin{equation}
G_{(3D)}^{+}(t,x)=\frac{\delta(t-|x|)}{4\pi|x|}, \notag
\end{equation}
where $\delta(t)$ is the Dirac's delta function. In 2D%
\begin{equation}
G_{(2D)}^{+}(t,x)=\left\{
\begin{array}
[c]{cc}%
\frac{1}{2\pi\sqrt{t^{2}-|x|^{2}}} & t>|x|,\\
0 & \text{otherwise}%
\end{array}
\right.  .     \notag
\end{equation}
The measurements $p(t,y)$ are just the values of the convolution
(\ref{E:Green-convolution}) at points $y$ lying on $\partial Q.$ The time
anti-derivative of the data $P(t,y)$ can be easily recovered from $p(t,y)$:%
\begin{equation}
p(t,y)=\frac{\partial}{\partial t}P(t,y),\quad P(0,y)=0,\quad y\in\partial Q. \notag
\end{equation}
Function $P(t,y)$ is related to $\ f(x)$ by the convolution%
\begin{equation}
P(t,y)=\int\limits_{\Omega}f(x)G^{+}(t,y-x)dx,\quad y\in\partial Q. \notag
\end{equation}
For notational
convenience we extend $p(t,y)$ to negative values of $t$ by zero:%
\begin{equation}
p(t,y)=0,\quad t\leq 0.  \label{E:negativetime}
\end{equation}%
(With this convention equation (\ref{E:Green-convolution}) is valid for all $%
t\in \mathbb{R}$.)

Below we will also use the advanced free-space Green's function $G^{-}(t,x)$
that allows one to solve wave equation backwards in time. It can be defined
simply by reversing the sign in front of $t$:%
\begin{equation}
G^{-}(t,x)=G^{+}(-t,x).       \notag
\end{equation}

Consider now an integrable function $\varphi(t,y)$ defined on $\mathbb{R\times
}\partial Q$, finitely supported in the first variable and decaying in $y$
fast enough so that the integral below converges absolutely. One can define a
retarded single layer potential $u_{\varphi}^{+}(t,x)$ with density
$\varphi(t,y)$ by the following equation \cite{Fulks,Tuong}:%
\begin{align*}
u_{\varphi}^{+}(t,x)  &  =\int\limits_{\partial Q}\int\limits_{-\infty
}^{t-|y-x|}\varphi(\tau,y)G^{+}(t-\tau,y-x)d\tau dy\\
&  =\int\limits_{\partial Q}\int\limits_{|y-x|}^{\infty}\varphi(t-s,y)G^{+}%
(s,y-x)dsdy.
\end{align*}
Similarly, an advanced single layer potential is given by the formula%
\begin{align*}
u_{\varphi}^{-}(t,x)  &  =\int\limits_{\partial Q}\int\limits_{-\infty
}^{-|y-x|}\varphi(t-s,y)G^{-}(s,y-x)dsdy\\
&  =\int\limits_{\partial Q}\int\limits_{|y-x|}^{\infty}\varphi(t+s,y)G^{+}%
(s,y-x)dsdy.
\end{align*}
Due to vanishing of $G^{+}(s,y-x)$ when $s<|y-x|,$ one can equivalently write%
\begin{equation}
u_{\varphi}^{\pm}(t,x)=\int\limits_{\partial Q}\int\limits_{\mathbb{R}}%
\varphi(t\mp s,y)G^{+}(s,y-x)dsdy.                      \notag
\end{equation}

Our approach to the inverse problem in hand is based on the observation that
the integral $<f,u_{\varphi}^{-}(0,\cdot)>$ defined as%
\begin{equation}
<f,u_{\varphi}^{-}(0,\cdot)>\equiv\int\limits_{\Omega}f(x)u_{\varphi}%
^{-}(0,x)dx     \notag
\end{equation}
can be easily reconstructed from the data $P(t,y)$:%
\begin{align*}
\int\limits_{\Omega}f(x)u_{\varphi}^{-}(0,x)dx  &  =\int\limits_{\Omega
}f(x)\left[  \int\limits_{\partial Q}\int\limits_{\mathbb{R}}\varphi
(s,y)G^{+}(s,y-x)dsdy\right]  dx\\
&  =\int\limits_{\partial Q}\int\limits_{\mathbb{R}}\varphi(s,y)\left[
\int\limits_{\Omega}f(x)G^{+}(s,y-x)dx\right]  dsdy\\
&  =\int\limits_{\partial Q}\int\limits_{\mathbb{R}}\varphi(s,y)P(s,y)dsdy.
\end{align*}
This procedure can be repeated for\ a set of single layer potentials
$u_{\varphi}^{-}(t,x)$ defined by different densities $\varphi.$ If these
potentials evaluated at $\ t=0$ form some sort of a basis, the above integrals
provide projections of $f$ on the basis elements, making it possible to
reconstruct $f.$

Finding families of basis functions represented by single layer potentials is
not necessarily easy, especially in unbounded domains. In general, certain
solutions of the wave equation can be represented by a combination of a
single- and double- layer potentials. For example, if $u(t,x)$ is a finitely
supported in time solution of the wave equation in a bounded region $Q^{*}$
with a boundary $\partial Q^*$, then $u$ can be represented in the following
form \cite{Fulks,Tuong}%
\begin{align*}
u(t,x)  &  =\int\limits_{\partial Q^*}\int\limits_{\mathbb{R}}\frac{\partial
}{\partial n}u(t\mp s,y)G^{+}(s,y-x)dsdy\\
&  -\int\limits_{\partial Q^*}\int\limits_{\mathbb{R}}u(t\mp s,y)\frac{\partial
}{\partial n}G^{+}(s,y-x)dsdy.
\end{align*}
where second term is a double layer potential with density $u(\tau,y).$ Sign
$\mp$ in the above equation indicates that $u(t,x)$ can be expressed through
its boundary values (including the values of the normal derivative) either in
the past, at the times $\,(t-s),$ or in the future (at the time values $t+s).$

However, for our purposes we need representations expressed only by a single
potential supported on a boundary of a domain, and our domains are not
bounded. In what follows we show that such representations can be obtained for
certain combinations of delta-like plane waves. This will allow us to recover
from the data the integrals of products of $f(x)$ with these functions; such
integrals are related to the values of the Radon transform of $f(x)$. Then,
$f(x)$ can be reconstructed by inverting the Radon transform.

\bigskip

\section{Single layer representations}

In this section we find single layer representations for certain combinations
of plane waves.

\subsection{2D case}

Consider now a Coxeter cross, i.e. a set of $N$ straight lines passing through
the origin, with the angle $\beta=\pi/N$ between any two adjacent lines;
without loss of generality we assume that one of these lines coincides with
$x_{1}$-axis. The boundary of the 2D angular segment $Q$ arising in the 2D
formulation of our inverse problem (see Section \ref{S:formulation}) is a
subset of this set of points. (For a discussion of injectivity of the spherical
mean Radon transform with centers lying on a Coxeter cross we refer the reader
to~\cite{AQ}.)

\subsubsection{An ``odd" operator}

In this section we define an operator that\ maps an arbitrary continuous
function of $x\in\mathbb{R}^{2}$ into a function that is odd with respect to
reflections about all the lines constituting the Coxeter cross.

Define a reflector $\mathfrak{R}$ mapping $x=(x_{1},x_{2})^{T}$ onto
$\mathfrak{R}x=(x_{1},-x_{2})^{T},$ and a linear operator $\Upsilon_{\varphi}$
rotating any vector $x\in\mathbb{R}^{2}$ \ by the angle $\varphi$
counterclockwise$.$ Obviously
\begin{equation}
\Upsilon_{2\beta}^{N}=\Upsilon_{2\beta}^{-N}=I \label{E:rotator-prop1}%
\end{equation}
where $I$ is the identity transformation. Now, let us define an operator $O$
transforming a continuous function $h(x)$ into a continuous function
$h_{O}(x)$ according to the following formula%
\begin{equation}
h_{O}(x)=Oh(x)=\sum_{k=0}^{N-1}\left[  h(\Upsilon_{2\beta}^{k}%
x)-h(\mathfrak{R}\Upsilon_{2\beta}^{k}x)\right]  . \label{E:odd-oper}%
\end{equation}
It follows from (\ref{E:rotator-prop1}) and (\ref{E:odd-oper}) that $h_{O}(x)$
is invariant with respect to rotations by the angle $2\beta$
\begin{equation}
h_{O}(\Upsilon_{2\beta}x)=h_{O}(\Upsilon_{2\beta}^{-1}x)=h_{O}(x),   \notag
\end{equation}
and, in general%
\begin{equation}
h_{O}\left(  \Upsilon_{2\beta}^{k}x\right)  =h_{O}(x),\quad k\in\mathbb{Z}.
\label{E:rot-invar}%
\end{equation}
Further, we notice that due to (\ref{E:rotator-prop1})
\begin{equation}
\sum_{k=0}^{N-1}h(\mathfrak{R}\Upsilon_{2\beta}^{k}x)=\sum_{k=0}%
^{N-1}h(\mathfrak{R}\Upsilon_{2\beta}^{-k}x)=\sum_{k=0}^{N-1}h(\Upsilon
_{2\beta}^{k}\mathfrak{R}x), \label{E:rotator-prop2}%
\end{equation}
where the second equality is verified as follows. Each $x$ can be written as
\ $|x|\Upsilon_{\theta}e_{1}$, where $e_{1}=(1,0)^{T}$ and $\theta$ is some
angle. Then
\begin{equation}
\mathfrak{R}\Upsilon_{2\beta}^{-k}x=\mathfrak{R}\Upsilon_{2\beta}^{-k}\left(
|x|\Upsilon_{\theta}e_{1}\right)  =|x|\mathfrak{R}\Upsilon_{(-2k\beta+\theta
)}e_{1}=|x|\Upsilon_{(2k\beta-\theta)}e_{1}=\Upsilon_{2\beta}^{k}%
\mathfrak{R}x,         \notag
\end{equation}
which proofs the second equality in (\ref{E:rotator-prop2}). Therefore,
operator $O$ (defined by equation (\ref{E:odd-oper})) admits an alternative
representation
\begin{equation}
h_{O}(x)=Oh(x)=\sum_{k=0}^{N-1}\left[  h(\Upsilon_{2\beta}^{k}x)-h(\Upsilon
_{2\beta}^{k}\mathfrak{R}x)\right]  . \label{E:odd-oper1}%
\end{equation}
Since $\mathfrak{R}^{2}=I,$ it is immediately clear from (\ref{E:odd-oper1})
that
\begin{equation}
h_{O}(\mathfrak{R}x)=-h_{O}(x),    \notag
\end{equation}
i.e., $h_{O}(x)$ is an odd function in $x_{2}$. Moreover, due to the rotation
invariance (\ref{E:rot-invar}), $h_{O}(x)$ is odd with respect to any line
that is a rotation of the $x_{1}$ axis by the angle $2\beta k,$ $k\in
\mathbb{\not Z}$, and%
\begin{equation}
h_{O}(\mathfrak{R}\Upsilon_{2\beta}^{k}x)=-h_{O}(x),\quad k\in\mathbb{Z}.
\label{E:oddrefl}%
\end{equation}

One can also show that $h_{O}$ is odd with respect to any line that is a
rotation of $x_{1}$-axis by the angle ($2k+1)\beta,$ $k\in\mathbb{Z}$.
Let's consider two points $c\Upsilon_{\alpha}\Upsilon_{\beta}e_{1}$ and
$c\Upsilon_{-\alpha}\Upsilon_{\beta}e_{1}$ symmetric with respect to the line
$t\Upsilon_{\beta}e_{1}$, $t\in(-\infty,\infty).$ Due to periodicity (equation
(\ref{E:rotator-prop1})), $h_{O}$ admits yet another representation%
\begin{equation}
h_{O}(x)=Oh(x)=\sum_{k=0}^{N-1}\left[  h(\Upsilon_{2\beta}^{k}x)-h(\Upsilon
_{2\beta}^{k+1}\mathfrak{R}x)\right]  .    \notag
\end{equation}
Then%
\begin{align*}
h_{O}(c\Upsilon_{\alpha}\Upsilon_{\beta}e_{1})  &  =\sum_{k=0}^{N-1}\left[
h(c\Upsilon_{2\beta}^{k}x\Upsilon_{\alpha}\Upsilon_{\beta}e_{1})-h(c\Upsilon
_{2\beta}^{k+1}\mathfrak{R}\Upsilon_{\alpha}\Upsilon_{\beta}e_{1})\right] \\
&  =\sum_{k=0}^{N-1}\left[  h(c\Upsilon_{2\beta(k+1)+\alpha}e_{1}%
)-h(c\Upsilon_{2\beta(k+1)-\alpha}e_{1})\right]  =-h_{O}(c\Upsilon_{-\alpha
}\Upsilon_{\beta}e_{1}).
\end{align*}
Therefore, $h_{O}$ is odd with respect to a rotation of $x_{1}$-axis by the
angle $\beta,$ and due to the $2\beta$-periodicity (equation
(\ref{E:rot-invar})), it is odd with respect to a rotation of $x_{1}$-axis by
the angle $(2k+1)\beta.$ Thus, we have proved the following

\begin{proposition}
Operator $O$ defined by equation (\ref{E:odd-oper}) maps a continuous function
into a continuous function that is odd with respect to reflections about each
of the lines constituting the Coxeter cross.
\end{proposition}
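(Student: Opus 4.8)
The plan is to verify separately the two assertions of the proposition — that $h_{O}=Oh$ is continuous, and that it is odd under reflection in every line of the Coxeter cross — and to obtain the oddness in two stages, first for the lines at even multiples of $\beta$ and then for the lines at odd multiples of $\beta$.

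Continuity is the easy part. By (\ref{E:odd-oper}), $h_{O}$ is a finite linear combination of functions $x\mapsto h(Ax)$, where $A$ runs over the orthogonal maps $\Upsilon_{2\beta}^{k}$ and $\mathfrak{R}\Upsilon_{2\beta}^{k}$; each such function is continuous as the composition of the continuous $h$ with a linear map, and a finite sum of continuous functions is continuous.

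For the oddness I would proceed as follows. First, $\Upsilon_{2\beta}^{N}=I$ (equation (\ref{E:rotator-prop1})) makes the sum in (\ref{E:odd-oper}) invariant under $x\mapsto\Upsilon_{2\beta}x$, since rotating the argument only cyclically permutes the summation index; hence $h_{O}(\Upsilon_{2\beta}^{k}x)=h_{O}(x)$ for all $k\in\mathbb{Z}$, which is (\ref{E:rot-invar}). Next, the identity (\ref{E:rotator-prop2}) lets one rewrite $O$ in the symmetrized form (\ref{E:odd-oper1}); the only substantive ingredient there is the relation $\mathfrak{R}\Upsilon_{2\beta}^{-k}=\Upsilon_{2\beta}^{k}\mathfrak{R}$ on $\mathbb{R}^{2}$, which, on writing a generic vector as $|x|\Upsilon_{\theta}e_{1}$, reduces to the elementary fact $\mathfrak{R}\Upsilon_{\varphi}\mathfrak{R}=\Upsilon_{-\varphi}$, i.e. conjugating a rotation by $\mathfrak{R}$ reverses its angle. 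From (\ref{E:odd-oper1}) and $\mathfrak{R}^{2}=I$ one reads off $h_{O}(\mathfrak{R}x)=-h_{O}(x)$, so $h_{O}$ is odd in $x_{2}$; combined with the rotation invariance this yields (\ref{E:oddrefl}), the oddness of $h_{O}$ under reflection in every line obtained from the $x_{1}$-axis by a rotation through a multiple of $2\beta$.

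To cover the remaining lines, those at odd multiples of $\beta$, I would invoke yet another re-indexing of (\ref{E:odd-oper}) — again legitimate by the $2\beta$-periodicity — namely $Oh(x)=\sum_{k=0}^{N-1}[h(\Upsilon_{2\beta}^{k}x)-h(\Upsilon_{2\beta}^{k+1}\mathfrak{R}x)]$, and evaluate $h_{O}$ at the two points $c\Upsilon_{\pm\alpha}\Upsilon_{\beta}e_{1}$, which are mirror images of each other in the line $\{t\Upsilon_{\beta}e_{1}\}$. Using $\mathfrak{R}e_{1}=e_{1}$ and collecting the arguments angle by angle, each of the two expanded sums turns out to be the negative of the other, so $h_{O}$ is odd under reflection in the line at angle $\beta$, and the $2\beta$-periodicity then propagates this to the lines at angles $(2k+1)\beta$. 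Since $\beta=\pi/N$, the angles $k\beta$, $k\in\mathbb{Z}$, enumerate exactly the $N$ distinct lines of the Coxeter cross, which finishes the proof. I do not expect any analytic difficulty; the only thing requiring care is the bookkeeping of the composed rotations and reflections, where every step reduces to repeated use of $\mathfrak{R}\Upsilon_{\varphi}\mathfrak{R}=\Upsilon_{-\varphi}$.
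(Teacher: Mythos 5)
Your proposal is correct and follows essentially the same route as the paper: rotation invariance of $h_{O}$ from the periodicity (\ref{E:rotator-prop1}), the commutation relation $\mathfrak{R}\Upsilon_{2\beta}^{-k}=\Upsilon_{2\beta}^{k}\mathfrak{R}$ yielding the symmetrized form (\ref{E:odd-oper1}) and hence oddness about the lines at even multiples of $\beta$, and then the re-indexed sum evaluated at the pair $c\Upsilon_{\pm\alpha}\Upsilon_{\beta}e_{1}$ to handle the lines at odd multiples of $\beta$. The only addition is your explicit (and trivially correct) continuity remark, which the paper leaves implicit.
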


\begin{corollary}
If $h_{O}(x)=Oh(x)$, then $h_{O}(x)$ vanishes on each of the lines
constituting the Coxeter cross, and, in particular, on the boundary $\partial
Q$ of segment $Q.$
\end{corollary}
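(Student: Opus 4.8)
The plan is to read the Corollary off directly from the Proposition, using the elementary fact that a function which is odd under a reflection must vanish on the corresponding mirror line. So the argument will be essentially one line, plus a remark identifying $\partial Q$ as a subset of the Coxeter cross.

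First I would fix one of the $N$ lines $L$ of the Coxeter cross and let $R_{L}$ denote the linear reflection about $L$; concretely $R_{L}=\mathfrak{R}\Upsilon_{2\beta}^{k}$ for a suitable $k$ when $L$ is an even multiple of $\beta$ away from the $x_{1}$-axis, and $R_{L}$ is one of the odd-multiple reflections treated just before the Proposition otherwise. By the Proposition (equivalently, by (\ref{E:oddrefl}) together with the odd-multiple statement established right before it) we have $h_{O}(R_{L}x)=-h_{O}(x)$ for all $x$. Since $R_{L}$ fixes every point of $L$, this forces $h_{O}(x)=h_{O}(R_{L}x)=-h_{O}(x)$, hence $h_{O}(x)=0$, for every $x\in L$. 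Thus $h_{O}$ vanishes identically on each of the $N$ lines of the cross.

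Next I would observe that $\partial Q=\mathbb{R}^{+}\cup\mathbb{R}_{\beta}^{+}$ is contained in the union of two of these lines: $\mathbb{R}^{+}$ lies on the $x_{1}$-axis, which by our normalization is one of the cross lines, and $\mathbb{R}_{\beta}^{+}=\{c\,\Upsilon_{\beta}e_{1}\mid c\geq 0\}$ lies on the line $t\,\Upsilon_{\beta}e_{1}$, which is the rotation of the $x_{1}$-axis by $\beta=\pi/N$ and hence also a line of the Coxeter cross (this inclusion of $\partial Q$ in the cross was already noted above). Consequently $h_{O}$ vanishes on $\partial Q$, which is the ``in particular'' claim. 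There is essentially no obstacle here: the statement is an immediate consequence of the Proposition once one sees that $\partial Q$ sits inside the cross. If one prefers not to invoke the Proposition, the two pieces can be verified directly instead --- vanishing on the $x_{1}$-axis from (\ref{E:oddrefl}) with $k=0$ by setting $x_{2}=0$, and vanishing on the line $t\,\Upsilon_{\beta}e_{1}$ from the last displayed identity preceding the Proposition by setting $\alpha=0$ --- but that is strictly more work than needed.
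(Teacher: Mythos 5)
Your argument is correct and is exactly the reasoning the paper leaves implicit: a function odd under a reflection vanishes on the fixed line of that reflection, and $\partial Q=\mathbb{R}^{+}\cup\mathbb{R}_{\beta}^{+}$ lies in the union of two lines of the Coxeter cross. Nothing is missing.
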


\begin{proposition}
If support of $h(x)$ is contained within $Q,$ then restriction of $h_{O}$ to
$Q$ coincides with $h,$ i.e.%
\begin{equation}
h(x)=h_{O}(x),\quad\forall x\in Q.     \notag
\end{equation}

\end{proposition}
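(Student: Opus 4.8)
The plan is to recognize the $2N$ Euclidean motions occurring in (\ref{E:odd-oper}) --- the rotations $\Upsilon_{2\beta}^{k}$ and the reflected rotations $\mathfrak{R}\Upsilon_{2\beta}^{k}$, $k=0,1,\dots,N-1$ --- as the full symmetry (dihedral) group of the Coxeter cross, and to observe that they carry the angular sector $Q$ onto $2N$ sectors of opening $\beta=\pi/N$ with pairwise disjoint interiors whose union exhausts $\mathbb{R}^{2}$ up to the lines of the cross. In other words $Q$ is (the interior of) a fundamental domain for this group, and among the $2N$ image points of a given $x\in Q$ exactly one --- the one produced by the identity element $\Upsilon_{2\beta}^{0}$ --- lies in $Q$ itself, all the others lying outside $\overline{Q}$ and hence outside the support of $h$.

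To make this precise I would fix $x\in Q$ and write it in the polar form $x=|x|\,\Upsilon_{\theta}e_{1}$ with $\theta\in(0,\beta)$, exactly as in the computation preceding (\ref{E:rotator-prop2}). Then $\Upsilon_{2\beta}^{k}x=|x|\,\Upsilon_{\theta+2k\beta}e_{1}$ has angular coordinate $\theta+2k\beta\in(2k\beta,(2k+1)\beta)$; since $2N\beta=2\pi$, for $k=1,\dots,N-1$ this interval is disjoint from $[0,\beta]$, so $\Upsilon_{2\beta}^{k}x\notin\overline{Q}$. Likewise, using $\mathfrak{R}\Upsilon_{\psi}e_{1}=\Upsilon_{-\psi}e_{1}$, the point $\mathfrak{R}\Upsilon_{2\beta}^{k}x=|x|\,\Upsilon_{-(\theta+2k\beta)}e_{1}$ has angular coordinate congruent modulo $2\pi$ to a number in the interval $\big((2(N-k)-1)\beta,\,2(N-k)\beta\big)$, which is one of the ``odd'' sectors and hence disjoint from $[0,\beta]$ for every $k=0,\dots,N-1$. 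Thus all $2N-1$ image points other than $\Upsilon_{2\beta}^{0}x=x$ lie outside $\overline{Q}$.

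Finally I would substitute these observations into (\ref{E:odd-oper}): since $\operatorname{supp}h\subseteq Q$, the function $h$ vanishes at each of the $2N-1$ non-identity image points, so every summand of $h_{O}(x)$ vanishes except $h(\Upsilon_{2\beta}^{0}x)=h(x)$, giving $h_{O}(x)=h(x)$ for all $x\in Q$, as claimed. There is no serious obstacle here; the only points demanding care are the bookkeeping of angular coordinates modulo $2\pi$ needed to confirm that the $2N$ image sectors are genuinely disjoint and that $\Upsilon_{2\beta}^{0}$ is the unique group element fixing $Q$ setwise, and the remark that, the support of $h$ being contained in the open sector $Q$, the lines of the Coxeter cross (where the image sectors meet) never contribute.
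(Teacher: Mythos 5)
Your proof is correct and follows essentially the same route as the paper's: the paper's one-line argument is precisely that every term of (\ref{E:odd-oper}) other than $h(x)$ itself is supported in a rotated and/or reflected copy of the sector $Q$ disjoint from $Q$, which is exactly what your angular-coordinate bookkeeping verifies in detail. No gaps; your version merely makes explicit the disjointness of the $2N$ image sectors that the paper takes as evident.
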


\begin{proof}
If support of $h(x)$ is contained within $Q$, then each term in
(\ref{E:odd-oper}) that is not $h(x)$ itself, is supported within one of
rotated and/or reflected versions of segment $Q$ not intersecting $Q$ itself.
\end{proof}

\begin{proposition}
Suppose $g(x)$ and $h(x)$ are continuous functions, one of them is compactly
supported, with $g_{O}(x)=Og(x)$ and $h_{O}(x)=Oh(x).$ Then%
\begin{equation}
\int\limits_{\mathbb{R}^{2}}g_{O}(x)h(x)dx=\int\limits_{\mathbb{R}^{2}}%
h_{O}(x)g(x)dx.   \notag
\end{equation}
\label{T:oddcommute}
\end{proposition}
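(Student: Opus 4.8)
The plan is to use the definition of $O$ in equation (\ref{E:odd-oper1}) (the representation involving $\Upsilon_{2\beta}^k\mathfrak{R}$), substitute it for $g_O$ inside the left-hand integral, and then perform changes of variables $x\mapsto\Upsilon_{2\beta}^{-k}x$ and $x\mapsto\mathfrak{R}\Upsilon_{2\beta}^{-k}x$ term by term. Since each of $\Upsilon_{2\beta}$ and $\mathfrak{R}$ is an orthogonal transformation of $\mathbb{R}^2$, the Jacobian of each substitution is $1$, so Lebesgue measure is preserved and the domain $\mathbb{R}^2$ is mapped onto itself. Under these substitutions the factors $g(\Upsilon_{2\beta}^k x)$ and $g(\Upsilon_{2\beta}^k\mathfrak{R}x)$ become simply $g(x)$, while the companion factor $h(x)$ turns into $h(\Upsilon_{2\beta}^{-k}x)$ or $h(\mathfrak{R}\Upsilon_{2\beta}^{-k}x)$; collecting all $N$ rotated and reflected copies of $h$ (with the correct signs) reproduces exactly $(Oh)(x)=h_O(x)$, using that summing over $k=0,\dots,N-1$ is the same whether the index runs as $k$ or as $-k$ modulo $N$ by the periodicity (\ref{E:rotator-prop1}). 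This yields $\int g_O(x)h(x)\,dx=\int g(x)h_O(x)\,dx$, which is the claim.

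Concretely, I would write
\begin{align*}
\int\limits_{\mathbb{R}^2} g_O(x)h(x)\,dx
&=\sum_{k=0}^{N-1}\int\limits_{\mathbb{R}^2}\Big[g(\Upsilon_{2\beta}^k x)-g(\Upsilon_{2\beta}^k\mathfrak{R}x)\Big]h(x)\,dx\\
&=\sum_{k=0}^{N-1}\Big[\int\limits_{\mathbb{R}^2}g(y)\,h(\Upsilon_{2\beta}^{-k}y)\,dy-\int\limits_{\mathbb{R}^2}g(y)\,h(\mathfrak{R}\Upsilon_{2\beta}^{-k}y)\,dy\Big],
\end{align*}
where in the first integral I set $y=\Upsilon_{2\beta}^k x$ and in the second $y=\Upsilon_{2\beta}^k\mathfrak{R}x$, noting $\mathfrak{R}^{-1}=\mathfrak{R}$. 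Reindexing the sum by $k\mapsto N-k$ (equivalently using $\Upsilon_{2\beta}^{-k}=\Upsilon_{2\beta}^{N-k}$) turns the bracket into $\sum_{k=0}^{N-1}\big[h(\Upsilon_{2\beta}^k y)-h(\Upsilon_{2\beta}^k\mathfrak{R}y)\big]=h_O(y)$, using also that the identity (\ref{E:rotator-prop2}) lets one move $\mathfrak{R}$ past the rotations so that $h(\mathfrak{R}\Upsilon_{2\beta}^{-k}y)$ summed over $k$ equals $h(\Upsilon_{2\beta}^{k}\mathfrak{R}y)$ summed over $k$. Hence the right-hand side is $\int_{\mathbb{R}^2}g(y)h_O(y)\,dy$.

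The only points requiring care — and the place where a reader might stumble — are bookkeeping ones rather than analytic ones: first, that the change of variables is legitimate, which needs the compact support of one of $g,h$ (guaranteeing the integrals converge absolutely and Fubini/substitution apply), exactly as assumed in the hypothesis; and second, that the reindexing of the sum over the cyclic group and the commutation of $\mathfrak{R}$ with the rotation powers are done consistently, so that the signs in the alternating sum are preserved and one genuinely recovers $h_O$ and not some other combination. I expect the reindexing/commutation step to be the main (though minor) obstacle, and it is handled entirely by the already-established relations (\ref{E:rotator-prop1}) and (\ref{E:rotator-prop2}); no new idea beyond invariance of Lebesgue measure under orthogonal maps is needed.
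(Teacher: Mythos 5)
Your proposal is correct and follows essentially the same route as the paper's own proof: expand $g_{O}$ via the definition of $O$, interchange summation and integration, change variables term by term using the measure-preserving orthogonal maps $\Upsilon_{2\beta}^{k}$ and $\mathfrak{R}$, and re-index the sum via the periodicity (\ref{E:rotator-prop1}) and the commutation identity (\ref{E:rotator-prop2}) to recover $h_{O}$. The only cosmetic difference is that you start from the alternative representation (\ref{E:odd-oper1}) rather than (\ref{E:odd-oper}); the computation is otherwise identical.
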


\begin{proof}
By using the definition of operator $O$ and interchanging the order of
summation and integration one obtains%
\begin{align*}
\int\limits_{\mathbb{R}^{2}}g_{O}(x)h(x)dx  &  =\sum_{k=0}^{N-1}%
\int\limits_{\mathbb{R}^{2}}g(\Upsilon_{2\beta}^{k}x)h(x)dx-\sum_{k=0}%
^{N-1}\int\limits_{\mathbb{R}^{2}}g(\mathfrak{R}\Upsilon_{2\beta}%
^{k}x)h(x)dx\\
&  =\sum_{k=0}^{N-1}\int\limits_{\mathbb{R}^{2}}g(x)h(\Upsilon_{2\beta}%
^{-k}x)dx-\sum_{k=0}^{N-1}\int\limits_{\mathbb{R}^{2}}g(x)h(\mathfrak{R}%
\Upsilon_{2\beta}^{-k}x)dx\\
&  =\int\limits_{\mathbb{R}^{2}}g(x)\left(  \sum_{k=0}^{N-1}\left[
h(\Upsilon_{2\beta}^{-k}x)-h(\mathfrak{R}\Upsilon_{2\beta}^{-k}x)\right]
\right)  dx=\\
&  =\int\limits_{\mathbb{R}^{2}}g(x)\left(  \sum_{k=0}^{N-1}\left[
h(\Upsilon_{2\beta}^{k}x)-h(\mathfrak{R}\Upsilon_{2\beta}^{k}x)\right]
\right)  dx=\int\limits_{\mathbb{R}^{2}}h_{O}(x)g(x)dx,
\end{align*}
where (\ref{E:rotator-prop1}) was used on the last line.
\end{proof}

Finally, the following statement follows directly from (\ref{E:rot-invar}) and
(\ref{E:oddrefl}).

\begin{proposition}
\label{T:additionalsym2D}If $g(x)$ and $h(x)$ are continuous functions and
$g(x)$ is compactly supported then%
\begin{align*}
\int\limits_{\mathbb{R}^{2}}g(x)h_{O}\left(  \Upsilon_{2\beta}^{k}x\right)
dx  &  =\int\limits_{\mathbb{R}^{2}}g(x)h_{O}(x)dx,\\
\int\limits_{\mathbb{R}^{2}}g(x)h_{O}\left(  \Upsilon_{2\beta}^{k}%
\mathfrak{R}x\right)  dx  &  =-\int\limits_{\mathbb{R}^{2}}g(x)h_{O}\left(
x\right)  dx,\quad k\in\mathbb{Z}.
\end{align*}

\end{proposition}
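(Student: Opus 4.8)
The plan is to transport the pointwise symmetry relations already established for $h_{O}$ directly under the integral sign, without any change of variables or interchange of summation and integration. The compact support of $g$, together with continuity of both $g$ and $h_{O}$ (the latter being a finite sum of continuous functions by (\ref{E:odd-oper})), guarantees that all the integrals appearing in the statement converge absolutely, so such a substitution is legitimate.

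For the first identity I would simply note that the rotation invariance (\ref{E:rot-invar}) gives $h_{O}(\Upsilon_{2\beta}^{k}x)=h_{O}(x)$ for every $x\in\mathbb{R}^{2}$ and every $k\in\mathbb{Z}$; inserting this equality into the integrand of $\int_{\mathbb{R}^{2}}g(x)h_{O}(\Upsilon_{2\beta}^{k}x)\,dx$ reproduces $\int_{\mathbb{R}^{2}}g(x)h_{O}(x)\,dx$. For the second identity I would first combine the operators: by the conjugation relation $\mathfrak{R}\Upsilon_{2\beta}^{-k}=\Upsilon_{2\beta}^{k}\mathfrak{R}$ (the identity used to verify (\ref{E:rotator-prop2})) one has $\Upsilon_{2\beta}^{k}\mathfrak{R}x=\mathfrak{R}\Upsilon_{2\beta}^{-k}x$, and then (\ref{E:oddrefl}), applied with exponent $-k$ (immaterial, since $k$ ranges over all of $\mathbb{Z}$), yields $h_{O}(\Upsilon_{2\beta}^{k}\mathfrak{R}x)=h_{O}(\mathfrak{R}\Upsilon_{2\beta}^{-k}x)=-h_{O}(x)$ pointwise. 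Substituting this into $\int_{\mathbb{R}^{2}}g(x)h_{O}(\Upsilon_{2\beta}^{k}\mathfrak{R}x)\,dx$ gives $-\int_{\mathbb{R}^{2}}g(x)h_{O}(x)\,dx$, as claimed. Alternatively, one may first use (\ref{E:rot-invar}) to replace $h_{O}(\Upsilon_{2\beta}^{k}\mathfrak{R}x)$ by $h_{O}(\mathfrak{R}x)$ and then invoke the $k=0$ case of (\ref{E:oddrefl}), i.e.\ $h_{O}(\mathfrak{R}x)=-h_{O}(x)$.

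I do not expect a genuine obstacle here. The only points requiring attention are bookkeeping ones: matching the order in which $\Upsilon_{2\beta}^{k}$ and $\mathfrak{R}$ are composed in the statement against the order appearing in (\ref{E:oddrefl}), and confirming the absolute convergence that justifies working under the integral. Unlike the proof of Proposition \ref{T:oddcommute}, no Fubini-type interchange is needed, since $h_{O}$ is already available as a single continuous function and we are merely exploiting its symmetries.
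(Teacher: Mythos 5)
Your argument is correct and is exactly the route the paper intends: the text states that the proposition ``follows directly from (\ref{E:rot-invar}) and (\ref{E:oddrefl}),'' and you have simply spelled out the pointwise substitutions (including the conjugation $\Upsilon_{2\beta}^{k}\mathfrak{R}x=\mathfrak{R}\Upsilon_{2\beta}^{-k}x$ needed to match the operator order in (\ref{E:oddrefl})). No issues.
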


\bigskip

\bigskip

\subsection{3D case}

In the 3D version of our problem, region $Q$ is the first Cartesian octet $Q$
defined by equation (\ref{E:Q-3D}).

\subsubsection{An ``odd" operator}

In this section we define an operator that\ maps an arbitrary continuous
function of 3D variable into function that is odd with respect to reflections
about any of the three coordinate planes.

Let us define operators $\mathfrak{R}j,$ $j=1,2,3,$ reflecting any
$x=(x_{1},x_{2},x_{3})^{T}$ with respect to the coordinate plane with the
normal $e_{j}$:%
\begin{align}
\mathfrak{R}_{1}x  &  =(-x_{1},x_{2},x_{3})^{T},\nonumber\\
\mathfrak{R}_{2}x  &  =(x_{1},-x_{2},x_{3})^{T},\label{E:reflector3D}\\
\mathfrak{R}_{3}x  &  =(x_{1},x_{2},-x_{3})^{T}.\nonumber
\end{align}
It follows from the above definition that these reflectors commute.

We can now define an operator $O$\ mapping continuous function $h(x)$ into
function $h_{O}(x)$ odd with respect to reflections about each of the three
coordinate planes$:$%
\begin{align}
h_{O}(x)  &  =Oh(x)\equiv h(x)-h(\mathfrak{R}_{1}x)-h(\mathfrak{R}%
_{2}x)-h(\mathfrak{R}_{3}x)\nonumber\\
&  +h(\mathfrak{R}_{2}\mathfrak{R}_{1}x)+h(\mathfrak{R}_{3}\mathfrak{R}%
_{1}x)+h(\mathfrak{R}_{3}\mathfrak{R}_{2}x)-h(\mathfrak{R}_{3}\mathfrak{R}%
_{2}\mathfrak{R}_{1}x). \label{E:odd-oper3D}%
\end{align}
Since operators $\mathfrak{R}j$ commute and are self-invertible, verifying
that operator $O$ have the above-mentioned property is straightforward.

\begin{proposition}
Operator $O$ defined by equation (\ref{E:odd-oper3D}) maps a continuous
function into a continuous function that is odd with respect to reflections
about each of the three coordinate planes.
\end{proposition}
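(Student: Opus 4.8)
The plan is to recognize $O$ as a sign-twisted average over the group $G=\langle\mathfrak{R}_{1},\mathfrak{R}_{2},\mathfrak{R}_{3}\rangle$ generated by the three coordinate reflections (\ref{E:reflector3D}). Since the $\mathfrak{R}_{j}$ commute and satisfy $\mathfrak{R}_{j}^{2}=I$, the group $G$ is abelian and consists of exactly eight elements, each of the form $\mathfrak{R}_{S}\equiv\prod_{j\in S}\mathfrak{R}_{j}$ for a subset $S\subseteq\{1,2,3\}$. My first step is to record, by simply matching the eight terms of (\ref{E:odd-oper3D}), the identity
\[
Oh(x)=\sum_{S\subseteq\{1,2,3\}}(-1)^{|S|}h(\mathfrak{R}_{S}x);
\]
indeed the coefficient of $h(\mathfrak{R}_{S}x)$ in (\ref{E:odd-oper3D}) is $+1$ for $|S|$ even and $-1$ for $|S|$ odd, which is exactly $\det\mathfrak{R}_{S}=(-1)^{|S|}$.

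Continuity of $h_{O}$ is then immediate, since it is a finite linear combination of compositions of the continuous function $h$ with the linear (hence continuous) maps $\mathfrak{R}_{S}$.

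For the oddness I would fix $j\in\{1,2,3\}$ and evaluate $Oh(\mathfrak{R}_{j}x)$ from the displayed formula. Replacing $x$ by $\mathfrak{R}_{j}x$ turns the $S$-term into $(-1)^{|S|}h(\mathfrak{R}_{S}\mathfrak{R}_{j}x)=(-1)^{|S|}h(\mathfrak{R}_{S\triangle\{j\}}x)$, where $\triangle$ denotes symmetric difference and I have used $\mathfrak{R}_{j}^{2}=I$ together with commutativity of the reflectors. Because $|S\triangle\{j\}|=|S|\pm1$, one has $(-1)^{|S|}=-(-1)^{|S\triangle\{j\}|}$; and because $S\mapsto S\triangle\{j\}$ is a bijection of the power set of $\{1,2,3\}$ onto itself, re-indexing the sum by $S'=S\triangle\{j\}$ yields $Oh(\mathfrak{R}_{j}x)=-\sum_{S'}(-1)^{|S'|}h(\mathfrak{R}_{S'}x)=-Oh(x)$. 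Since $j$ was arbitrary, $h_{O}$ is odd with respect to reflection about each of the three coordinate planes, which is the claim.

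I do not expect a genuine obstacle: the whole argument is bookkeeping in the group $(\mathbb{Z}/2)^{3}$, and the only place needing even mild care is the relabeling step, where one must check that $S\mapsto S\triangle\{j\}$ is a permutation of the index set and that it flips the parity of $|S|$ — after which the sign accounting is automatic. If one wishes to avoid this much abstraction, the same conclusion follows by pairing the eight explicit terms of (\ref{E:odd-oper3D}) into four groups of two, exactly as was done for the Coxeter-cross operator in the 2D case.
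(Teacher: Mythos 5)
Your argument is correct, and it is precisely the ``straightforward verification'' the paper invokes: the paper offers no written proof beyond noting that the reflectors commute and are self-invertible, which are exactly the two facts your $S\mapsto S\triangle\{j\}$ reindexing uses. Writing the eight terms of (\ref{E:odd-oper3D}) as $\sum_{S\subseteq\{1,2,3\}}(-1)^{|S|}h(\mathfrak{R}_{S}x)$ and checking the parity flip is a clean way to fill in the details the author left to the reader.
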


Similarly, one proves

\begin{proposition}
\label{T:3D-refl}If $h(x)$ is bounded and $h_{O}(x)=Oh(x)$ then%
\begin{equation}
h_{O}\left(  \mathfrak{R}_{i}x\right)  =-h_{O}(x),\quad h_{O}\left(
\mathfrak{R}_{i}\mathfrak{R}_{j}x\right)  =h_{O}(x),\quad h_{O}\left(
\mathfrak{R}_{i}\mathfrak{R}_{j}\mathfrak{R}_{k}x\right)  =-h_{O}(x),\quad
i,j,k\in\{1,2,3\}.         \notag
\end{equation}

\end{proposition}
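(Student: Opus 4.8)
The plan is to recognize $O$ as a \emph{signed average over a group}. The eight transformations occurring in (\ref{E:odd-oper3D}) are exactly the elements of the group $G=\langle\mathfrak{R}_{1},\mathfrak{R}_{2},\mathfrak{R}_{3}\rangle$, which is isomorphic to $(\mathbb{Z}/2\mathbb{Z})^{3}$ because the $\mathfrak{R}_{j}$ commute and square to the identity; and the sign prefixed to the term $h(gx)$ in (\ref{E:odd-oper3D}) equals $\det g$, since $\det\mathfrak{R}_{j}=-1$ and the determinant is multiplicative. Hence
\[
h_{O}(x)=Oh(x)=\sum_{g\in G}(\det g)\,h(gx).
\]
The key step is then the single identity: for every fixed $a\in G$,
\[
h_{O}(ax)=(\det a)\,h_{O}(x).
\]
This follows by reindexing the sum through $g'=ga$, which is a bijection of $G$ onto itself because $a\in G$, together with $\det g=(\det a)^{-1}\det g'=(\det a)\det g'$ (using $\det a=\pm1$).

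Given this master identity, Proposition~\ref{T:3D-refl} is obtained by specialization. Each of $a=\mathfrak{R}_{i}$, $a=\mathfrak{R}_{i}\mathfrak{R}_{j}$, $a=\mathfrak{R}_{i}\mathfrak{R}_{j}\mathfrak{R}_{k}$ lies in $G$, and their determinants are $-1$, $+1$, $-1$ respectively, giving the three asserted relations at once. The same identity with $a=\mathfrak{R}_{i}$ also reproves the preceding proposition (oddness of $h_{O}$ about each coordinate plane). Alternatively, once the preceding proposition is granted, the last two relations follow by iterating oddness: putting $y=\mathfrak{R}_{j}x$ gives $h_{O}(\mathfrak{R}_{i}\mathfrak{R}_{j}x)=-h_{O}(\mathfrak{R}_{j}x)=h_{O}(x)$, and one further application with $y=\mathfrak{R}_{k}x$ gives $h_{O}(\mathfrak{R}_{i}\mathfrak{R}_{j}\mathfrak{R}_{k}x)=-h_{O}(x)$.

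I do not expect a real obstacle here; the statement is a bookkeeping consequence of the group structure of $G$. The only points to watch are that (i) $h$ need only be defined everywhere --- boundedness, and indeed continuity, are irrelevant since the sum in (\ref{E:odd-oper3D}) is finite, so there is no convergence issue; and (ii) the relations are claimed for arbitrary indices $i,j,k$, including repeated ones, which is automatic in the group formulation because $\mathfrak{R}_{i}\mathfrak{R}_{j}\mathfrak{R}_{k}\in G$ and has determinant $(-1)^{3}=-1$ regardless of whether the indices are distinct. I would write the group-averaging identity once and read off both the preceding proposition and Proposition~\ref{T:3D-refl} from it, rather than expanding the eight terms of (\ref{E:odd-oper3D}) by hand.
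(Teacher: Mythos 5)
Your proof is correct, and it is essentially the paper's argument made explicit: the paper offers no written proof, stating only that the claim follows ``straightforwardly'' from the facts that the reflectors $\mathfrak{R}_{j}$ commute and are self-invertible, and your signed group-average $h_{O}(x)=\sum_{g\in G}(\det g)\,h(gx)$ with the reindexing $g\mapsto ga$ is precisely the clean formalization of that verification. Your side remarks (boundedness is not actually needed, and repeated indices are handled automatically by the group structure) are accurate and do not change the argument.
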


\begin{corollary}
If $h_{O}(x)=Oh(x)$, then $h_{O}(x)$ vanishes on each of the coordinate
planes, in particular, on the boundary $\partial Q$ of the first octet $Q.$
\end{corollary}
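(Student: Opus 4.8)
The plan is to derive this purely from Proposition~\ref{T:3D-refl}, which asserts the odd-symmetry relation $h_{O}(\mathfrak{R}_{i}x)=-h_{O}(x)$ for each $i\in\{1,2,3\}$. First I would fix an index $i$ and observe that a point $x$ lies on the coordinate plane with normal $e_{i}$ precisely when its $i$-th coordinate vanishes, which is equivalent to $\mathfrak{R}_{i}x=x$ by the definition~(\ref{E:reflector3D}) of the reflectors. Substituting this fixed point into the symmetry relation gives $h_{O}(x)=h_{O}(\mathfrak{R}_{i}x)=-h_{O}(x)$, hence $2h_{O}(x)=0$ and therefore $h_{O}(x)=0$. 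Running this argument for $i=1,2,3$ shows that $h_{O}$ vanishes on all three coordinate planes.

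Second, I would note that $\partial Q$, the boundary of the first octet $Q=\{x\in\mathbb{R}^{3}\mid x_{1}>0,x_{2}>0,x_{3}>0\}$, is contained in the union of the three coordinate planes: any boundary point has at least one vanishing coordinate. Since $h_{O}$ is zero on each of those planes, it is in particular zero on $\partial Q$, which is the assertion of the corollary.

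I do not expect any genuine obstacle here; the only point requiring a word of care is that Proposition~\ref{T:3D-refl} is stated under the hypothesis that $h$ is bounded (so that the defining sum~(\ref{E:odd-oper3D}) and the symmetry manipulations make sense pointwise), so I would carry that assumption along and not claim more regularity than $O$ provides, namely continuity of $h_{O}$ as in the preceding proposition. The whole proof is a one-line evaluation of the symmetry identity at fixed points of the reflectors, followed by the set-theoretic remark about $\partial Q$.

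\begin{proof}
Fix $i\in\{1,2,3\}$ and let $x$ be any point lying on the coordinate plane with normal $e_{i}$, i.e.\ with $i$-th coordinate equal to zero. By~(\ref{E:reflector3D}) we then have $\mathfrak{R}_{i}x=x$, so Proposition~\ref{T:3D-refl} gives
\begin{equation*}
h_{O}(x)=h_{O}(\mathfrak{R}_{i}x)=-h_{O}(x),
\end{equation*}
whence $h_{O}(x)=0$. As $i$ was arbitrary, $h_{O}$ vanishes on each of the three coordinate planes. Finally, every point of $\partial Q$ has at least one vanishing coordinate, so $\partial Q$ is contained in the union of the three coordinate planes, and therefore $h_{O}$ vanishes on $\partial Q$ as well.
\end{proof}
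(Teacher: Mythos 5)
Your proof is correct and is exactly the argument the paper intends: the corollary is stated without an explicit proof, as an immediate consequence of the oddness relation $h_{O}(\mathfrak{R}_{i}x)=-h_{O}(x)$ evaluated at the fixed points of $\mathfrak{R}_{i}$, which is precisely what you do. The concluding observation that $\partial Q$ lies in the union of the coordinate planes is the same implicit step the paper relies on.
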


\begin{proposition}
If support of $h(x)$ is contained within $Q,$ then restriction of $h_{O}$ to
$Q$ coincides with $h,$ i.e.%
\begin{equation}
h(x)=h_{O}(x),\quad\forall x\in Q.   \notag
\end{equation}

\end{proposition}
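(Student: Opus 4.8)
The plan is to mimic, essentially word for word, the argument already given for the two-dimensional Proposition of the same form. Fix an arbitrary $x\in Q$, so that $x_{1}>0$, $x_{2}>0$, and $x_{3}>0$. Every summand on the right-hand side of (\ref{E:odd-oper3D}) other than the leading term $h(x)$ has the shape $h(\mathfrak{R}_{S}x)$, where for a nonempty subset $S\subseteq\{1,2,3\}$ we write $\mathfrak{R}_{S}=\prod_{j\in S}\mathfrak{R}_{j}$ (the order is irrelevant because the $\mathfrak{R}_{j}$ commute). By (\ref{E:reflector3D}), the $j$-th coordinate of $\mathfrak{R}_{S}x$ equals $-x_{j}<0$ for every $j\in S$; hence $\mathfrak{R}_{S}x\notin Q$ for each such $S$.

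The next step is to use the hypothesis $\operatorname{supp}h\subset Q$. Since $h$ vanishes off $Q$, the previous observation gives $h(\mathfrak{R}_{S}x)=0$ for every $x\in Q$ and every nonempty $S\subseteq\{1,2,3\}$. Equivalently, each of the seven sets $\mathfrak{R}_{S}(\operatorname{supp}h)$ is a reflected copy of a subset of the (open) first octet and therefore does not meet $Q$. Collapsing the eight-term sum in (\ref{E:odd-oper3D}) then leaves exactly $h_{O}(x)=Oh(x)=h(x)$ for all $x\in Q$, which is the assertion.

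I do not expect any genuine obstacle here; the only point deserving a line of care is the reading of ``support contained within $Q$'' — it should be understood, consistently with the standing assumption that $\Omega$ is properly contained in $Q$, as saying that $\operatorname{supp}h$ lies in the open octet, so that the reflected copies are disjoint from $Q$ rather than merely abutting it along $\partial Q$. On $\partial Q$ itself both sides vanish by the Corollary preceding this Proposition, so the identity in fact persists on $\overline{Q}$.
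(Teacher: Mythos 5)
Your argument is correct and is exactly the paper's approach: the paper proves the 2D analogue by noting that every term other than $h(x)$ itself is supported in a reflected/rotated copy of $Q$ disjoint from $Q$, and leaves the 3D case as the evident analogue, which is precisely what you carry out with the reflections $\mathfrak{R}_{S}$. Your closing remark about reading the support condition relative to the open octet is a fair (and harmless) clarification consistent with the paper's standing assumption that $\Omega$ is properly contained in $Q$.
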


\begin{proposition}
Suppose $g(x)$ and $h(x)$ are continuous functions, one of them is compactly
supported, with $g_{O}(x)=Og(x)$ and $h_{O}(x)=Oh(x).$ Then%
\begin{equation}
\int\limits_{\mathbb{R}^{3}}g_{O}(x)h(x)dx=\int\limits_{\mathbb{R}^{3}}%
h_{O}(x)g(x)dx.        \notag
\end{equation}

\end{proposition}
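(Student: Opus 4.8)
The plan is to mirror, essentially verbatim, the argument already used in Proposition~\ref{T:oddcommute} for the 2D case, since the only structural ingredients there were that the symmetry group is generated by finitely many orthogonal maps that preserve Lebesgue measure. First I would expand the left-hand side $\int_{\mathbb{R}^3} g_O(x) h(x)\,dx$ using the definition (\ref{E:odd-oper3D}): this writes the integral as a signed sum of eight terms of the form $\pm\int_{\mathbb{R}^3} g(\mathfrak{R} x)\, h(x)\,dx$, where $\mathfrak{R}$ ranges over the eight products of the reflectors $\mathfrak{R}_1,\mathfrak{R}_2,\mathfrak{R}_3$ (the identity included), with the sign pattern dictated by (\ref{E:odd-oper3D}).

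Next, in each of the eight terms I would perform the change of variables $x\mapsto \mathfrak{R}^{-1} x$. Each $\mathfrak{R}$ is an orthogonal linear map with Jacobian of absolute value $1$, so $dx$ is preserved and the integral $\int g(\mathfrak{R} x)\, h(x)\,dx$ becomes $\int g(x)\, h(\mathfrak{R}^{-1} x)\,dx$. Crucially, each reflector is self-inverse and the reflectors commute, so $\mathfrak{R}^{-1}=\mathfrak{R}$ for every one of the eight maps; hence the term becomes $\int g(x)\, h(\mathfrak{R} x)\,dx$ with the \emph{same} $\mathfrak{R}$ and the same sign. Summing the eight terms and factoring out $g(x)$ then reconstitutes exactly $\sum_{\mathfrak{R}} (\pm 1) h(\mathfrak{R} x) = h_O(x)$ inside the integral, giving $\int_{\mathbb{R}^3} g(x)\, h_O(x)\,dx$, which is the desired identity. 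The compact support of one of $g,h$ (together with continuity of both) is what licenses interchanging the finite sum with the integral and guarantees absolute convergence throughout.

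I do not anticipate any genuine obstacle: the proof is a finite bookkeeping argument, and the single point requiring care is to confirm that $\mathfrak{R}^{-1}=\mathfrak{R}$ for all eight composite maps so that the sign pattern in (\ref{E:odd-oper3D}) is exactly reproduced after the substitution. This is immediate from self-invertibility and commutativity of the $\mathfrak{R}_j$, facts already recorded right after (\ref{E:odd-oper3D}). (Contrast the 2D case, where the analogous step used the rotation identity $\Upsilon_{2\beta}^{-k} = \Upsilon_{2\beta}^{N-k}$ together with reindexing of the sum; here the group of symmetries is its own inverse elementwise, so no reindexing is needed.) I would present the computation as a short displayed chain of equalities paralleling the proof of Proposition~\ref{T:oddcommute}, and conclude.
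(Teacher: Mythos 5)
Your proposal is correct and is exactly the argument the paper intends: the paper simply states that the proof is analogous to that of Proposition~\ref{T:oddcommute}, and your expansion into eight signed terms followed by the measure-preserving change of variables $x\mapsto\mathfrak{R}x$ (using that each composite reflector is its own inverse, so no reindexing is needed) is the faithful 3D analogue of that 2D computation. No gaps.
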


The proof of the last proposition is similar to that of Proposition
\ref{T:oddcommute}.

As a direct consequence of Proposition \ref{T:3D-refl},\ one obtains \bigskip

\begin{proposition}
\label{T:additionalsym3D}If $g(x)$ and $h(x)$ are continuous functions, and
$g(x)$ is compactly supported, then%
\begin{align*}
\int\limits_{\mathbb{R}^{3}}g(x)h_{O}\left(  \mathfrak{R}_{i}\right)  dx  &
=-\int\limits_{\mathbb{R}^{3}}g(x)h_{O}(x)dx,\\
\int\limits_{\mathbb{R}^{3}}g(x)h_{O}\left(  \mathfrak{R}_{i}\mathfrak{R}%
_{j}x\right)   &  =\int\limits_{\mathbb{R}^{3}}g(x)h_{O}(x)dx,\\
\int\limits_{\mathbb{R}^{3}}g(x)h_{O}\left(  \mathfrak{R}_{i}\mathfrak{R}%
_{j}\mathfrak{R}_{k}x\right)   &  =-\int\limits_{\mathbb{R}^{3}}%
g(x)h_{O}(x)dx,\quad i,j,k\in\{1,2,3\}.
\end{align*}

\end{proposition}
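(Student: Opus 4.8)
The plan is to obtain all three identities as immediate pointwise substitutions, using the symmetry relations already recorded in Proposition~\ref{T:3D-refl}. Since $h_{O}=Oh$ satisfies $h_{O}(\mathfrak{R}_{i}x)=-h_{O}(x)$, $h_{O}(\mathfrak{R}_{i}\mathfrak{R}_{j}x)=h_{O}(x)$, and $h_{O}(\mathfrak{R}_{i}\mathfrak{R}_{j}\mathfrak{R}_{k}x)=-h_{O}(x)$ for \emph{every} $x\in\mathbb{R}^{3}$, the functions appearing in the integrands of Proposition~\ref{T:additionalsym3D} differ from $h_{O}(x)$ only by a constant sign. Hence one simply replaces the composite function under the integral sign: for instance
\begin{equation*}
\int\limits_{\mathbb{R}^{3}}g(x)\,h_{O}(\mathfrak{R}_{i}x)\,dx=\int\limits_{\mathbb{R}^{3}}g(x)\,\big(-h_{O}(x)\big)\,dx=-\int\limits_{\mathbb{R}^{3}}g(x)\,h_{O}(x)\,dx,
\end{equation*}
and the two remaining cases are handled verbatim, with the sign $+$ for a product of two reflectors and $-$ for a product of three.

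The only thing that needs a word of justification is that these integrals are well defined. Because $g$ is continuous and compactly supported it is bounded and integrable, and because $h$ is continuous while $O$ is a fixed finite signed sum of compositions of $h$ with the (linear, hence continuous) reflectors, $h_{O}$ and each $h_{O}\circ\mathfrak{R}_{\bullet}$ are continuous; therefore every product $g\cdot h_{O}$, $g\cdot(h_{O}\circ\mathfrak{R}_{\bullet})$ is continuous and supported inside the support of $g$, so its integral over $\mathbb{R}^{3}$ converges absolutely. No change of variables and no Fubini-type interchange is involved — the argument is a pointwise substitution — so there is essentially no obstacle here; the entire content has been pushed into Proposition~\ref{T:3D-refl}.

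If one wished to avoid invoking Proposition~\ref{T:3D-refl} and argue directly from the definition (\ref{E:odd-oper3D}), one would note that the eight terms of $h_{O}$ correspond to the eight elements of the group generated by $\mathfrak{R}_{1},\mathfrak{R}_{2},\mathfrak{R}_{3}$, each weighted by $(-1)$ raised to the number of reflector factors. Left composition with $\mathfrak{R}_{i}$ permutes these eight elements among themselves while changing the number of factors by $\pm1$, hence flipping every weight; so $h_{O}(\mathfrak{R}_{i}x)$ is term-by-term the negative of $h_{O}(x)$ after reordering, and composition with a product of two (resp.\ three) reflectors flips the weights twice (resp.\ three times). Either way the verification is two lines, and the main (minor) point to keep in mind is simply bookkeeping of the parity of the reflector words.
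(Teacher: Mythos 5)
Your proof is correct and is exactly the argument the paper intends: the paper presents this proposition as "a direct consequence of Proposition~\ref{T:3D-refl}," i.e.\ a pointwise substitution of the sign identities into the integrand, which is what you do. Your added remarks on integrability and the parity-of-reflector-words bookkeeping are fine but not needed beyond what the paper already establishes.
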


\bigskip

\subsection{Waves represented by single layer potentials}

\subsubsection{Odd combinations of waves}

Consider a non-negative, infinitely smooth, even function $\eta\left(
t\right)  $ \ defined on $\mathbb{R}$, compactly supported with all its
derivatives on $(-1,1)$, and such that $\int_{-1}^1 \eta dt = 1$. Define scaled function $\eta_{\varepsilon}(t)$ by the
formula%
\begin{equation}
\eta_{\varepsilon}(t)=\frac{1}{\varepsilon}\eta\left(  \frac{1}{\varepsilon
}t\right)  .     \notag
\end{equation}
Family of functions $\eta_{\varepsilon}(t)$ is delta-approximating, in the sense of distributions:%
\begin{equation}
\lim_{\varepsilon\rightarrow0}\eta_{\varepsilon}(t)=\delta(t). \label{E:delta}%
\end{equation}
Now, let us consider a plane wave $u_{\varepsilon}(t,x)$ defined by the
equation
\begin{equation}
u_{\varepsilon,\omega}(t,x)=\eta_{\varepsilon}\left(  t-x\cdot\omega\right), \notag
\end{equation}
where $\omega$ is a unit vector defining the direction of propagation of such
a wave. It is easy to check that $u_{\varepsilon}(t,x)$ is a solution of the
wave equation in the whole space $\mathbb{R}^{d}$:%
\begin{equation}
\Delta u_{\varepsilon,\omega}=\frac{\partial^{2}}{\partial t^{2}%
}u_{\varepsilon,\omega},\qquad x\in\mathbb{R}^{d},\quad t\in\mathbb{R},   \notag
\end{equation}
and that, in the limit $\varepsilon\rightarrow0,$ $u_{\varepsilon,\omega}$
approximates a delta-wave%
\begin{equation}
\lim_{\varepsilon\rightarrow0}u_{\varepsilon,\omega}(t,x)=\delta
(t-x\cdot\omega).   \notag
\end{equation}

Let us now use the operator $O$ introduced in the previous sections and define
the following combination of plane waves:%
\begin{equation}
U_{\varepsilon,\omega}(t,x)=Ou_{\varepsilon,\omega}(t,x). \label{E:capitalU}%
\end{equation}
To simplify the notation, within this section we will suppress the subscripts
and denote this function by $U(t,x).$ Function $U(t,x)$ consists of the plane
wave $u_{\varepsilon,\omega}(t,x)$ together with its reflections and (in 2D
case) rotations, selected in such a way that it is odd with respect to a set
of lines (or planes) containing the boundary of $\partial Q.$ Thus,
\begin{equation}
U(t,x)=0,\qquad\forall x\in\partial Q,\quad\forall t\in\mathbb{R}.    \notag
\end{equation}

\begin{figure}[t]
\begin{center}
\subfigure[Wave $u_{\varepsilon,\omega}(t,x)$]{
\includegraphics[width=1.6in,height=1.6in]{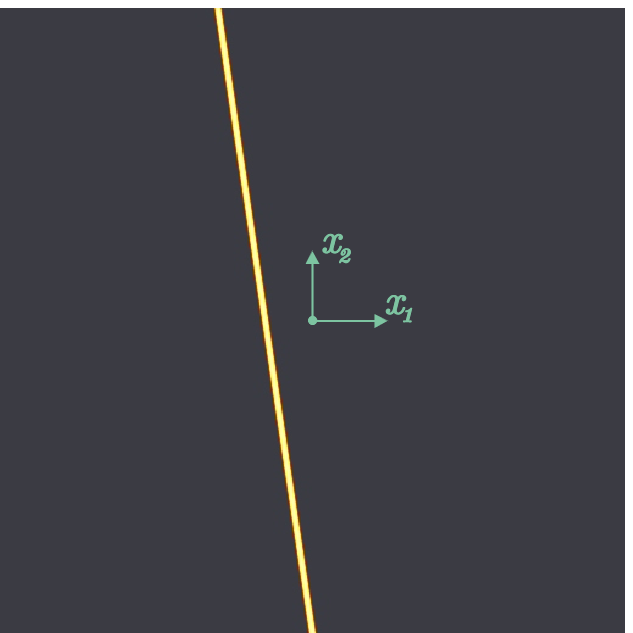}}
\subfigure[$U(t,x)$, case $N=2$]{
\includegraphics[width=1.6in,height=1.6in]{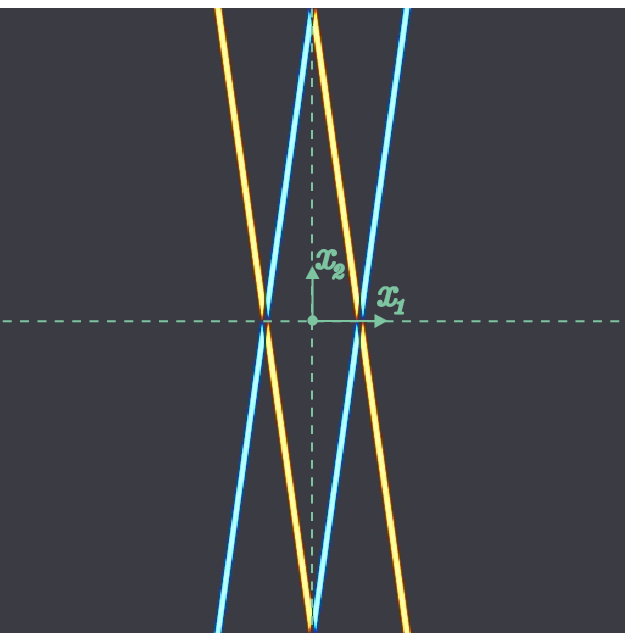}}
\subfigure[$U(t,x)$, case $N=3$]{
\includegraphics[width=1.6in,height=1.6in]{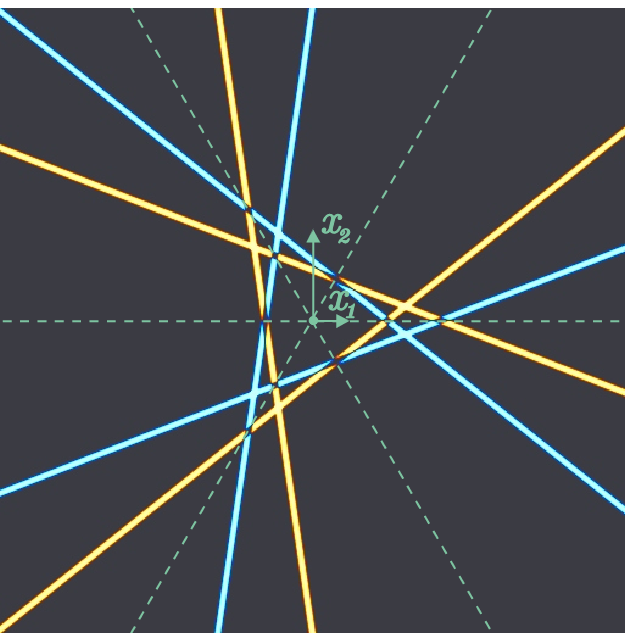}
\includegraphics[width=0.32in,height=1.6in]{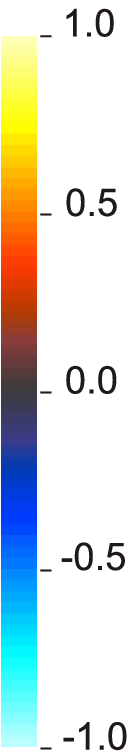}}
\end{center}
\caption{Constructing $U(t,x)$ from $u_{\varepsilon,\omega}(t,x)$ in the cases
$N=2$ and $N=3$. The dashed line shows the lines of the Coxeter cross;
$U(t,x)$ is odd with respect to these lines}%
\label{F:crosses}%
\end{figure}

From now on, let us assume that vector $\omega$ is pointing strictly inside $Q$.
In the 2D case the operator $O$ is defined by equation~(\ref{E:odd-oper}).

Apply it to $u_{\varepsilon,\omega}(t,x)$:
\begin{align}
U(t,x)  &  =Ou_{\varepsilon,\omega}(t,x)=\nonumber\\
&  =\sum_{k=0}^{N-1}\left[  \eta_{\varepsilon}\left(  t-\Upsilon_{2\beta}%
^{k}x\cdot\omega\right)  -\eta_{\varepsilon}\left(  t-\mathfrak{R}%
\Upsilon_{2\beta}^{k}x\cdot\omega\right)  \right] \nonumber\\
&  =\sum_{k=0}^{N-1}\left[  \eta_{\varepsilon}\left(  t-x\cdot\Upsilon
_{-2\beta}^{k}\omega\right)  -\eta_{\varepsilon}\left(  t-x\cdot
\mathfrak{R}\Upsilon_{-2\beta}^{k}\omega\right)  \right]  .
\label{E:U-details}%
\end{align}
It follows that $U(t,x)$ has the following form%
\begin{align}
U(t,x)  &  =\eta_{\varepsilon}\left(  t-x\cdot\omega\right)  +U^{\ast
}(t,x),\label{E:Ustar-sum}\\
U^{\ast}(t,x)  &  \equiv\sum_{j=1}^{2N-1}\sigma_{j}\eta_{\varepsilon}\left(
t-x\cdot\omega^{(j)}\right)  , \label{E:Ustar}%
\end{align}
where $\sigma_{j}$ equal $1$ or $-1,$ and unit vectors $\omega^{(j)},$
$j=1,..,2N-1,$ are rotated and/or reflected versions of $\omega.$ By
construction of operators $\mathfrak{R}$ and $\Upsilon_{-2\beta}^{k}$ entering
equation (\ref{E:U-details}), each of vectors $\omega^{(j)}$ lies in its own
sector of the Coxeter cross. Figure~\ref{F:crosses} shows two examples of
constructing $U(t,x)$ from $u_{\varepsilon,\omega}(t,x)$, for the cases $N=2$
and $N=3$.

In the 3D case, function $U(t,x)$ has a similar form, given by
equations~(\ref{E:Ustar-sum}),~(\ref{E:Ustar}) with $N=4;$ each of the vectors
$\omega^{(j)}$ in this case is pointing inside its own octet of the Cartesian grid.

\subsubsection{Representing odd waves by single layer potentials\label{S:repsinglay}}

We would like to represent $U(t,x)$ in $Q$ by an advanced single layer
potential supported on $\partial Q.$

Let us first consider a bounded subset $Q^{\ast}$ of $Q$ defined as follows.
Let $B(0,R)$ be the open ball of radius $R$ centered at the origin. $R$ will
be used as a large parameter. Define $Q^{\ast}\equiv Q\cap B(0,R).$ Boundary
$\partial Q^{\ast}$ of $Q^{\ast}$ consists of two parts:%
\begin{align*}
\partial Q^{\ast}  &  =\partial Q_{1}^{\ast}\cup\partial Q_{2}^{\ast},\\
\partial Q_{1}^{\ast}  &  =\partial Q\cap B(0,R)\\
\partial Q_{2}^{\ast}  &  =Q\cap\partial B(0,R).
\end{align*}
Since $\eta_{\varepsilon}$ is compactly supported on $(-\varepsilon, \varepsilon)$, function $U(t,x)$ identically vanishes in $B(0,R)$ for all
$t$ such that $t>t_{0}\equiv R+\varepsilon.$ Therefore, for all $x$ in
$Q^{\ast}$%
\begin{align}
U(t,x)  &  =\int\limits_{\partial Q^{\ast}}\int\limits_{|y-x|}^{\infty}\left[
G^{+}(s,y-x)\frac{\partial}{\partial n}U(t+s,y)dsdy-U(t+s,y)\frac{\partial
}{\partial n}G^{+}(s,y-x)\right]  dsdy\nonumber\\
&  =\int\limits_{\partial Q_{1}^{\ast}}\int\limits_{|y-x|}^{\infty}%
G^{+}(s,y-x)\frac{\partial}{\partial n}U(t+s,y)dsdy\nonumber\\
&  +\int\limits_{\partial Q_{2}^{\ast}}\int\limits_{|y-x|}^{\infty}\left[
G^{+}(s,y-x)\frac{\partial}{\partial n}U(t+s,y)dsdy-U(t+s,y)\frac{\partial
}{\partial n}G^{+}(s,y-x)\right]  dsdy, \label{E:repres1}%
\end{align}
where second equality is due to vanishing of $U(t,y)$ on $\partial Q$ and,
thus, on $\partial Q_{1}^{\ast}.$ Let us denote by $I$ the double integral on
the last line of the above equation. Then, due to~(\ref{E:Ustar-sum})%
\begin{align*}
I  &  =\int\limits_{\partial Q_{2}^{\ast}}\int\limits_{|y-x|}^{\infty}\left[
G^{+}(s,y-x)\frac{\partial}{\partial n}\eta_{\varepsilon}\left(
t+s-y\cdot\omega\right)  dsdy-\eta_{\varepsilon}\left(  t+s-y\cdot
\omega\right)  \frac{\partial}{\partial n}G^{+}(s,y-x)\right]  dsdy\\
&  +\int\limits_{\partial Q_{2}^{\ast}}\int\limits_{|y-x|}^{\infty}\left[
G^{+}(s,y-x)\frac{\partial}{\partial n}U^{\ast}(t+s,y)dsdy-U^{\ast
}(t+s,y)\frac{\partial}{\partial n}G^{+}(s,y-x)\right]  dsdy.
\end{align*}
Let us show that the last double integral in the above equation vanishes if
$R$ is chosen sufficiently large. \ Let us first consider only values of $x$
lying within region $A=Q\cap B(0,r_{0}),$ and values of $t$ such that $|t|\leq
r_{0}.$ We notice that for fixed $x$ and $y,$ integration in time variable is
done only over values $t+s,$ with $s>|x-y|$. Consider values of an arbitrary
term in $U^{\ast}$ (say, with subscript $j$ as defined by equation
(\ref{E:Ustar})) at the point $y$ and at time $t+s.$ It has form
$\eta_{\varepsilon}\left(  t+s-y\cdot\omega^{(j)}\right)  $. Function
$\eta_{\varepsilon}\left(  t+s-y\cdot\omega^{(j)}\right)  $ and its normal
derivative vanish outside of the interval%
\begin{equation}
-\varepsilon+y\cdot\omega^{(j)}<t+s<\varepsilon+y\cdot\omega^{(j)}.       \notag
\end{equation}
On the other hand, for $x\in A,$ the smallest value of $|y-x|$ is $R-r_{0},$
so that integration is done over the interval
\begin{equation}
s\geq R-r_{0}       \notag
\end{equation}
or
\begin{equation}
t+s\geq R-2r_{0}.     \notag
\end{equation}
It is sufficient to show that there is $R$ such that%
\begin{equation}
\varepsilon+y\cdot\omega^{(j)}<R-2r_{0} \label{E:inequ1}%
\end{equation}
for all $y\in\partial Q_{2}^{\ast}.$ Any such $y$ can be represented as
$y=\zeta R$, where $\zeta$ is a unit vector lying strictly within sector $Q.$
Since each $\omega^{(j)},$ $j=1,...,2N-1$ is fixed and lies strictly outside
$Q,$ there is a uniform (in $y$ and $j$) upper bound $\alpha$ on the dot product
$\zeta\cdot\omega^{(j)}$:%
\begin{equation}
\zeta\cdot\omega^{(j)}<\alpha<1. \label{E:inequ5}%
\end{equation}
Therefore, for the left hand side of (\ref{E:inequ1}) we obtain%
\begin{equation}
\varepsilon+y\cdot\omega^{(j)}<\varepsilon+R\alpha.         \notag
\end{equation}
It is easy to check that if $R$ is chosen so that%
\begin{equation}
R>\frac{\varepsilon+2r_{0}}{(1-\alpha)}, \label{E:inequ3}%
\end{equation}
then inequality (\ref{E:inequ1}) is satisfied for each $y\in\partial
Q_{2}^{\ast},$ $x\in A,$ and $t$ such that $|t|\leq r_{0}.$ Therefore, for
such $x$ and $t$ equation (\ref{E:repres1}) simplifies to
\begin{align}
U(t,x)  &  =\int\limits_{\partial Q_{1}^{\ast}}\int\limits_{|y-x|}^{\infty
}G^{+}(s,y-x)\frac{\partial}{\partial n}U(t+s,y)dsdy+\int\limits_{\partial
Q_{2}^{\ast}}\int\limits_{|y-x|}^{\infty}\left[  G^{+}(s,y-x)\frac{\partial
}{\partial n}\eta_{\varepsilon}\left(  t+s-y\cdot\omega\right)  \right.
\nonumber\\
&  -\left.  \eta_{\varepsilon}\left(  t+s-y\cdot\omega\right)  \frac{\partial
}{\partial n}G^{+}(s,y-x)\right]  dsdy. \label{E:interm}%
\end{align}
Let us now show that the second double integral in (\ref{E:interm}) equals
$\eta_{\varepsilon}\left(  t+s-y\cdot\omega\right)  $ for $|x|<r_{0}$ and
$|t|<r_{0}.$ Consider again ball $B(0,R)$ with the boundary $S(0,R)$. Within
this ball $\eta_{\varepsilon}\left(  t+s-y\cdot\omega\right)  $ can be
represented by the integral
\begin{equation}
\eta_{\varepsilon}=\int\limits_{S(0,R)}\int\limits_{|y-x|}^{\infty}\left[
G^{+}(s,y-x)\frac{\partial}{\partial n}\eta_{\varepsilon}\left(
t+s-y\cdot\omega\right)  -\eta_{\varepsilon}\left(  t+s-y\cdot\omega\right)
\frac{\partial}{\partial n}G^{+}(s,y-x)\right]  dsdy. \label{E:onewave}%
\end{equation}
Considerations similar to the above, show that if $R$ is chosen so that%
\begin{equation}
R>\frac{\varepsilon+2r_{0}}{(1-\beta)}, \label{E:inequ2}%
\end{equation}
with some $\beta<1,$ then the inner integral in (\ref{E:onewave}) vanishes for
all values $y=R \zeta$ such that%
\begin{equation}
\zeta\cdot\omega<\beta.              \notag
\end{equation}
%If one selects $\beta$ close enough to $1$ to guarantee that all $y$ satisfying the
%above inequality are lying within $\partial Q_{2}^{\ast}$, and if $R$
%satisfies (\ref{E:inequ2}), then
If one selects $\beta$ close enough to $1$ to guarantee that all $y \in \partial Q_{2}^{\ast}$
satisfy $\zeta\cdot\omega<\beta$, and if $R$
satisfies (\ref{E:inequ2}), then
\begin{equation}
\eta_{\varepsilon}=\int\limits_{\partial Q_{2}^{\ast}}\int\limits_{|y-x|}%
^{\infty}\left[  G^{+}(s,y-x)\frac{\partial}{\partial n}\eta_{\varepsilon
}\left(  t+s-y\cdot\omega\right)  -\eta_{\varepsilon}\left(  t+s-y\cdot
\omega\right)  \frac{\partial}{\partial n}G^{+}(s,y-x)\right]  dsdy.
\label{E:onewave2}%
\end{equation}
Now, by comparing (\ref{E:interm}) and (\ref{E:onewave2}) one obtains%
\begin{equation}
U(t,x)-\eta_{\varepsilon}\left(  t-x\cdot\omega\right)  =\int\limits_{\partial
Q_{1}^{\ast}}\int\limits_{|y-x|}^{\infty}G^{+}(s,y-x)\frac{\partial}{\partial
n}U(t+s,y)dsdy,\quad|x|\leq r_{0},\quad|t|\leq r_{0}, \label{E:onewave3}%
\end{equation}
valid if $R$ satisfies both (\ref{E:inequ3}) and (\ref{E:inequ2}). Finally, we
notice that for $t\in\lbrack-r_{0},-\varepsilon]$ support of the wave
$\eta_{\varepsilon}\left(  t-x\cdot\omega\right)  $ does not intersect $Q,$
and therefore this term in (\ref{E:onewave3}) can be dropped:%
\begin{equation}
U(t,x)=\int\limits_{\partial Q_{1}^{\ast}}\int\limits_{|y-x|}^{\infty}%
G^{+}(s,y-x)\frac{\partial}{\partial n}U(t+s,y)dsdy,\quad|x|\leq r_{0},\quad
t\in\lbrack-r_{0},-\varepsilon], \quad  x \in Q.           \notag
\end{equation}
Since for a fixed $\omega$ this representation is valid for any sufficiently
large $R,$ one can take a limit $R\rightarrow\infty$ and replace the
integration over $\partial Q_{1}^{\ast}$ by integration over $\partial Q.$
Moreover, since $r_{0}$ is arbitrary, this representation is valid within any
bounded region $\Omega$ properly contained in $Q,$ for $t<-\varepsilon$:%
\begin{equation}
U(t,x)=\int\limits_{\partial Q}\int\limits_{|y-x|}^{\infty}G^{+}%
(s,y-x)\frac{\partial}{\partial n}U(t+s,y)dsdy,\quad t<-\varepsilon, \quad  x \in Q.
\label{E:single-layer-rep}%
\end{equation}
Thus, we have proven

%\begin{proposition}
%The combination of
%plane waves $U(t,x)$ defined by equations
%(\ref{E:Ustar-sum}) and (\ref{E:Ustar}), can be represented within any bounded region $\Omega$
%properly contained in $Q$ by an advanced single layer
%potential~(\ref{E:single-layer-rep}); the representation is
%valid for all $t<-\varepsilon.$ \end{proposition}

\begin{proposition}
Let $U(t,x)$ be a combination of plane waves defined by equations (\ref%
{E:Ustar-sum}) and (\ref{E:Ustar}), with infinitely smooth $\eta
_{\varepsilon }$ supported on the interval $[-\varepsilon ,\varepsilon ]$
(where $\varepsilon >0$ is arbitrary) and vanishing with its derivatives at
the endpoints of the interval. Then $U(t,x)$ can be represented within any
bounded region $\Omega $ properly contained in $Q$ by an advanced single
layer potential~(\ref{E:single-layer-rep}); the representation is valid for
all $t<-\varepsilon .$
\end{proposition}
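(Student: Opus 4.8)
The plan is to derive the representation from the Kirchhoff (Green's) formula on a large bounded truncation of $Q$ and then pass to the limit. First I would fix $r_{0}>0$, set $Q^{\ast}=Q\cap B(0,R)$ for a large radius $R$, and split its boundary as $\partial Q^{\ast}=\partial Q_{1}^{\ast}\cup\partial Q_{2}^{\ast}$, where $\partial Q_{1}^{\ast}=\partial Q\cap B(0,R)$ is the flat piece and $\partial Q_{2}^{\ast}=Q\cap\partial B(0,R)$ the spherical cap. Because $\eta_{\varepsilon}$ is supported in $[-\varepsilon,\varepsilon]$, the wave $U(t,\cdot)$ vanishes identically on $B(0,R)$ once $t>R+\varepsilon$, so $U$ is a compactly-supported-in-time solution of the wave equation on $Q^{\ast}$, and the combined single- and double-layer representation for such solutions (recalled in the outline above) applies on $\partial Q^{\ast}$. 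On the flat piece $\partial Q_{1}^{\ast}$ the double-layer term drops, since $U$ vanishes on the lines (in 2D) or coordinate planes (in 3D) carrying $\partial Q$ — this is exactly the corollary to the oddness of $U$ under the Coxeter reflections — leaving only a single-layer potential supported on $\partial Q_{1}^{\ast}$.

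The technical core is to show that, for $R$ large and for $x\in A\equiv Q\cap B(0,r_{0})$, $|t|\le r_{0}$, the spherical-cap contribution collapses to exactly the incident wave $\eta_{\varepsilon}(t-x\cdot\omega)$. I would split $U=\eta_{\varepsilon}(\,\cdot-\cdot\,\omega)+U^{\ast}$ as in (\ref{E:Ustar-sum})--(\ref{E:Ustar}). For every summand $\eta_{\varepsilon}(t-x\cdot\omega^{(j)})$ of $U^{\ast}$ the direction $\omega^{(j)}$ lies strictly outside the closed cone $\overline{Q}$; writing $y=R\zeta$ with $\zeta$ a unit vector in $Q$ and using compactness of the sphere, one gets a uniform bound $\zeta\cdot\omega^{(j)}<\alpha<1$. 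On $\partial Q_{2}^{\ast}$ causality forces $s\ge R-r_{0}$, hence $t+s\ge R-2r_{0}$, whereas that summand and its normal derivative are nonzero only where $t+s<\varepsilon+y\cdot\omega^{(j)}<\varepsilon+R\alpha$; choosing $R>(\varepsilon+2r_{0})/(1-\alpha)$ makes these ranges disjoint, so the whole $U^{\ast}$ part of the cap integral vanishes. The incident-wave part must be handled separately, since $\omega$ itself points into $Q$ and cannot be discarded: here I would apply Kirchhoff's formula to the single plane wave $\eta_{\varepsilon}(t-x\cdot\omega)$ over the entire sphere $S(0,R)$, note by the identical estimate that the portion of $S(0,R)$ with $\zeta\cdot\omega<\beta$ contributes nothing, and pick $\beta<1$ close enough to $1$ that the cap $\{\zeta:\zeta\cdot\omega\ge\beta\}$ — where the integrand is supported — lies inside the open cone $Q$, so that the $S(0,R)$-integral reproducing $\eta_{\varepsilon}(t-x\cdot\omega)$ is in fact an integral over $\partial Q_{2}^{\ast}$ alone.

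Combining the two steps yields $U(t,x)-\eta_{\varepsilon}(t-x\cdot\omega)=\int_{\partial Q_{1}^{\ast}}\int_{|y-x|}^{\infty}G^{+}(s,y-x)\,\partial_{n}U(t+s,y)\,ds\,dy$ for $|x|\le r_{0}$, $|t|\le r_{0}$, valid whenever $R$ exceeds both thresholds. For $t\in[-r_{0},-\varepsilon]$ the support of $\eta_{\varepsilon}(t-x\cdot\omega)$ lies in $\{x:x\cdot\omega\le t+\varepsilon\le0\}$, which is disjoint from $Q$ because $x\cdot\omega>0$ for every $x$ in the cone $Q$ when $\omega$ points strictly inside it; hence that term can be dropped and identity (\ref{E:single-layer-rep}) holds on $A$ for such $t$. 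Finally, letting $R\to\infty$ replaces $\partial Q_{1}^{\ast}$ by all of $\partial Q$, and since $r_{0}$ was arbitrary the representation is valid on any bounded $\Omega$ properly contained in $Q$ for all $t<-\varepsilon$. I expect the main obstacle to be the cap analysis of the previous paragraph — in particular securing the uniform separation constant $\alpha<1$ for the reflected directions while simultaneously controlling the incident direction $\omega$, whose forward cap sits inside $Q$ rather than outside it, and therefore has to be reinserted via its own Kirchhoff representation instead of being estimated away.
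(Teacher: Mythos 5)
Your proposal is correct and follows essentially the same route as the paper: truncation to $Q\cap B(0,R)$, splitting the boundary into the flat part (where the double layer drops by oddness of $U$) and the spherical cap, estimating away the reflected/rotated waves via the uniform bound $\zeta\cdot\omega^{(j)}<\alpha<1$, reinserting the incident wave through its own Kirchhoff representation over $S(0,R)$ localized to a cap inside $Q$, dropping that term for $t<-\varepsilon$, and letting $R\to\infty$. Your phrasing of the $\beta$-cap step (that the support cap $\{\zeta:\zeta\cdot\omega\ge\beta\}$ must lie inside $Q$) is in fact a cleaner statement of what the paper intends there.
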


%Let us make one more observation on single layer representation
%(\ref{E:single-layer-rep}):
%
%\begin{proposition}
%\label{T:boundedsupport}For values of $t$ in the interval $(-r_{0}%
%,-\varepsilon)$ and for $|x|\leq r_{0},$ the spatial integration in
%(\ref{E:single-layer-rep}) is actually done over a bounded subset $\partial
%Q\cap B(0,R)$ for some sufficiently large $R=R(\omega),$ i.e.:%
%\begin{equation}
%U(t,x)=\int\limits_{\partial Q\cap B(0,R(\omega))}\quad\int\limits_{|y-x|}%
%^{\infty}G^{+}(s,y-x)\frac{\partial}{\partial n}U(t+s,y)dsdy,\quad
%-r_{0}<t<-\varepsilon,\quad|x|\leq r_{0}. \label{E:single-layer-rep1}%
%\end{equation}
%
%\end{proposition}
%
%\begin{proof}
%The proof of this statement is quite similar to the proof of the previous
%proposition. Here is a brief sketch of it. All the points $y$ on the
%integration surface $\partial Q$ can be represented in the form $y=\eta|y|.$
%Since vector $\omega$ and its reflections/rotations $\omega^{(j)}$ lie
%strictly within $Q$ and its reflections/rotations, there is an upper bound
%$\alpha(\omega),$ such that inequality similar to (\ref{E:inequ5}) is
%satisfied, with $R=|y|$ and $\alpha=\alpha(\omega).$ Therefore, for values of
%$y$ such that
%\begin{equation}
%|y|>R(\omega)\equiv\frac{\varepsilon+2r_{0}}{(1-\alpha(\omega))},
%\label{E:finitesupport}%
%\end{equation}
%the inner integral in (\ref{E:single-layer-rep}) vanishes, since support of
%$\frac{\partial}{\partial n}U(t+s,y)$ is disjoint from the integration
%interval. Hence, equation (\ref{E:single-layer-rep1}) holds.
%\end{proof}

Let us make one more observation on the single layer representation (\ref%
{E:single-layer-rep}). Spatial integration in this formula can be restricted
to a bounded subset of $\partial Q,$ per the following

\begin{proposition}
\label{T:boundedsupport}Let function $U(t,x)$ be defined as in the previous
proposition, with fixed function $\eta_{\varepsilon }$, and fixed values  $\varepsilon >0$ and $%
\omega \in Q.$ For arbitrary value $r_{0}>\varepsilon>0$ consider a bounded subset
$Q^{\ast} \equiv Q\cap B(0,r_{0})$ of $Q$. Then, there exists
value $R=R(\omega ,\varepsilon ,r_{0})$ such that the following
representation of $U(t,x)$ holds within $Q^*$:%
\begin{equation}
U(t,x)=\int\limits_{\partial Q\cap B(0,R))} \
\int\limits_{|y-x|}^{\infty }G^{+}(s,y-x)\frac{\partial }{\partial n}%
U(t+s,y)dsdy,\ -r_{0}<t<-\varepsilon ,\quad x\in Q^{\ast }.
\label{E:single-layer-rep1}
\end{equation}
\end{proposition}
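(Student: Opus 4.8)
The plan is to re-examine the chain of estimates already carried out in the derivation of the single layer representation~(\ref{E:single-layer-rep}), keeping track of which part of $\partial Q$ actually contributes, rather than passing to the limit $R\to\infty$. The key point is that in~(\ref{E:onewave3}) the spatial integration runs over $\partial Q_1^\ast=\partial Q\cap B(0,R)$, and the only obstacle to shrinking this set is whether the integrand $\frac{\partial}{\partial n}U(t+s,y)$ can be nonzero for $y\in\partial Q$ with $|y|$ large. So the first step is to note that $U(t+s,y)=\sum_j \sigma_j\eta_\varepsilon\!\left(t+s-y\cdot\omega^{(j)}\right)$ (including the $j=0$ term $\omega^{(0)}=\omega$), and each summand, together with its normal derivative, is supported where $-\varepsilon+y\cdot\omega^{(j)}<t+s<\varepsilon+y\cdot\omega^{(j)}$.

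Next, I would impose the constraints on $t$ and $s$: we want $-r_0<t<-\varepsilon$, and in the inner integral $s\ge|y-x|$ with $x\in Q^\ast=Q\cap B(0,r_0)$, hence $s\ge |y|-r_0$. Therefore $t+s\ge |y|-2r_0$. On the other hand, for the $j$-th summand to contribute one needs $t+s<\varepsilon+y\cdot\omega^{(j)}\le\varepsilon+|y|\,|\omega^{(j)}|=\varepsilon+|y|$ — but this bound is too weak; the sharp statement uses that $y$ lies on $\partial Q$, i.e. on one of the coordinate planes (3D) or on one of the two bounding rays (2D), while the vectors $\omega^{(j)}$ for $j\ge 1$ point strictly into reflected/rotated sectors. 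One then needs a bound of the form $y\cdot\omega^{(j)}\le\gamma|y|$ with $\gamma<1$ uniform over $y\in\partial Q$ and over $j$, after which $t+s<\varepsilon+\gamma|y|$. Combined with $t+s\ge|y|-2r_0$ this forces $|y|<\frac{\varepsilon+2r_0}{1-\gamma}$, so any contributing $y$ satisfies $|y|<R$ for $R:=R(\omega,\varepsilon,r_0)$ equal to (or slightly exceeding) that quantity. Hence restricting the $y$-integration to $\partial Q\cap B(0,R)$ changes nothing, which is exactly~(\ref{E:single-layer-rep1}).

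A subtlety I would address carefully is the $j=0$ term, $\eta_\varepsilon(t+s-y\cdot\omega)$ with $\omega\in Q$: since $\omega$ points strictly inside $Q$ and $y\in\partial Q$ lies on a bounding plane/ray, we still have $y\cdot\omega\le\gamma|y|$ with $\gamma<1$ — indeed on the bounding hyperplane the inner normal has a strictly positive component along $\omega$, so the tangential projection has norm strictly less than $1$ — so the same estimate applies and no separate treatment is required. (Concretely, in 3D with $y$ on the plane $x_k=0$ one has $y\cdot\omega=\sum_{i\ne k}y_i\omega_i\le|y|\sqrt{1-\omega_k^2}$ with $\omega_k>0$; the 2D case is analogous with the two rays.) I would take $\gamma$ to be the maximum over all finitely many $(j,\text{face})$ pairs of these strictly-sub-unit constants, which exists and is $<1$ because $\omega$ is fixed and strictly interior.

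The main obstacle is purely bookkeeping: making sure the uniform constant $\gamma<1$ genuinely exists and that the resulting $R$ is consistent with (does not contradict) the lower bounds~(\ref{E:inequ3}) and~(\ref{E:inequ2}) used to establish the representation in the first place — i.e.\ we must pick $R$ large enough for~(\ref{E:single-layer-rep}) to be valid on $Q^\ast$ and simultaneously note that beyond this same $R$ there is no contribution to the $\partial Q$-integral. Since both conditions are of the form ``$R$ exceeds an explicit constant depending only on $\omega,\varepsilon,r_0$,'' one simply takes $R$ to be the maximum of the relevant thresholds, and the proof is complete.
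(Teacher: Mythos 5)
Your proposal is correct and follows essentially the same route as the paper: both arguments rest on the uniform bound $y\cdot\omega^{(j)}\le\alpha(\omega)\,|y|$ with $\alpha(\omega)<1$ for $y\in\partial Q$ (now including $j=0$, since $\omega$ is strictly interior to $Q$), combined with $t+s\ge|y|-2r_0$ on the integration range, to conclude that the inner $s$-integral vanishes once $|y|\ge(\varepsilon+2r_0)/(1-\alpha(\omega))$, which is exactly the threshold~(\ref{E:finitesupport}) in the paper. Your explicit Cauchy--Schwarz verification of the $j=0$ case and the remark about reconciling $R$ with~(\ref{E:inequ3}) and~(\ref{E:inequ2}) merely fill in details the paper's sketch leaves implicit.
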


\begin{proof}
The proof of this statement is quite similar to the proof of the previous
proposition. Here is a brief sketch of it. All the points $y$ on the
integration surface $\partial Q$ can be represented in the form $y=\zeta|y|.$
Since vector $\omega $ and its reflections/rotations $\omega ^{(j)}$ lie
correspondingly strictly within $Q$ and within  reflections/rotations of $Q$%
, there is an upper bound $\alpha (\omega ),$ such that an inequality similar
to (\ref{E:inequ5}) is satisfied, with $R=|y|$ and $\alpha =\alpha (\omega ).
$ Therefore, for values of $y$ such that
\begin{equation}
|y|>R(\omega ,\varepsilon ,r_{0})\equiv \frac{\varepsilon +2r_{0}}{(1-\alpha
(\omega ))},  \label{E:finitesupport}
\end{equation}%
the inner integral in (\ref{E:single-layer-rep}) vanishes, since support of $%
\frac{\partial }{\partial n}U(t+s,y)$ is disjoint from the integration
interval. Hence, equation (\ref{E:single-layer-rep1}) holds.
\end{proof}

\section{Reconstruction of Radon projections}

The goal of this section is to recover the Radon projections of $f$ from
measurements $p(t,y).$ More precisely, we will be able to reconstruct
projections of $Of$; this, however, is enough to reconstruct $f$ by inverting
the Radon transform.

The latter transform, $\mathcal{R}$, of an arbitrary continuous compactly
supported function $h(x)$ can be defined as follows \cite{Natterer}:
\begin{equation}
\left(  \mathcal{R}h\right)  (t,\omega)=\int_{\mathbb{R}^{d}}h(x)\delta
(t-x\cdot\omega)dx,\quad d=2,3, \label{E:Radon}%
\end{equation}
where unit vector $\omega$ is lying on a unit circle (in 2D) or a sphere
$\mathbb{S}^{2}$ (in 3D). For a fixed values of $\omega$ and $t$ the Radon
transform equals to an integral of a function over a hyperplane normal to
$\omega$ and lying at a distance $|t|$ from the origin. The range of
variable $t$ is chosen so that all hyperplanes intersecting the support of $h$
are accounted for. Inversion formulas allowing one to reconstruct $h$ from
known values of $\mathcal{R}h$ are well known, along with a number of
efficient reconstruction algorithms (e.g., \cite{Natterer,Kuchment}).

Following the idea presented in Section \ref{S:outline}, we utilize the single
layer representation (\ref{E:single-layer-rep}) and find the integral of
$f(x)$ with the combination of plane waves $U_{\varepsilon,\omega}(t,x)$
defined by equation (\ref{E:capitalU}):%

\begin{align}
\int\limits_{\Omega}f(x)U_{\varepsilon,\omega}(t,x)dx  &  =\int\limits_{\Omega
}f(x)\left[  \int\limits_{\partial Q}\int\limits_{|y-x|}^{\infty}%
G^{+}(s,y-x)\frac{\partial}{\partial n}U_{\varepsilon,\omega}%
(t+s,y)dsdy\right]  dx\nonumber\\
&  =\int\limits_{\partial Q}\int\limits_0^\infty\frac{\partial}{\partial
n}U_{\varepsilon,\omega}(t+s,y)\left[  \int\limits_{\Omega}f(x)G^{+}%
(s,y-x)dx\right]  dsdy\nonumber\\
&  =\int\limits_{\partial Q}\int\limits_0^\infty P(s,y)\frac{\partial
}{\partial n}U_{\varepsilon,\omega}(t+s,y)dsdy. \label{E:inner-pr}%
\end{align}
Let us modify formulas (\ref{E:Ustar}) and (\ref{E:Ustar-sum}) and write%
\begin{equation}
U_{\varepsilon,\omega}(t,x)=\sum_{j=0}^{2N-1}\sigma_{j}\eta_{\varepsilon
}\left(  t-x\cdot\omega^{(j)}\right)  ,  \notag
\end{equation}
with $\sigma_{0}=1$ and $\omega^{(0)}=\omega.$ Now the normal derivative of
$U_{\varepsilon,\omega}$ can be computed explicitly:%
\begin{align*}
\frac{\partial}{\partial n(y)}U_{\varepsilon,\omega}(t+s,y)  &  =n(y)\cdot
\nabla U_{\varepsilon,\omega}(t+s,y)=-\sum_{j=0}^{2N-1}\sigma_{j}\left(
n(y)\cdot\omega^{(j)}\right)  \eta_{\varepsilon}^{\prime}\left(
t+s-y\cdot\omega^{(j)}\right) \\
&  =-\frac{\partial}{\partial s}\sum_{j=0}^{2N-1}\sigma_{j}\left(
n(y)\cdot\omega^{(j)}\right)  \eta_{\varepsilon}\left(  t+s-y\cdot\omega
^{(j)}\right)  .
\end{align*}
\newline Substitute the above expression in (\ref{E:inner-pr}) and integrate
by parts in $s$:%
\begin{equation}
\int\limits_{\Omega}f(x)U_{\varepsilon,\omega}(t,x)dx=\int\limits_{\partial
Q}\int\limits_0^\infty p(s,y)\left[  \sum_{j=0}^{2N-1}\sigma_{j}\left(
n(y)\cdot\omega^{(j)}\right)  \eta_{\varepsilon}\left(  t+s-y\cdot\omega
^{(j)}\right)  \right]  dsdy.  \notag
\end{equation}
By taking the limit $\varepsilon\rightarrow0$ and recalling
(\ref{E:negativetime}) and (\ref{E:delta}) we obtain
\begin{equation}
\lim_{\varepsilon\rightarrow0}\int\limits_{\Omega}f(x)U_{\varepsilon,\omega
}(t,x)dx=\int\limits_{\partial Q}\left[  \sum_{j=0}^{2N-1}\sigma_{j}\left(
n(y)\cdot\omega^{(j)}\right)  p(y\cdot\omega^{(j)}-t,y)\right]  dy.
\notag \end{equation}
The left hand side of the above equation can be modified using Proposition
\ref{T:oddcommute}:%
\begin{align*}
\lim_{\varepsilon\rightarrow0}\int\limits_{\Omega}f(x)U_{\varepsilon,\omega
}(t,x)dx  &  =\lim_{\varepsilon\rightarrow0}\int\limits_{\Omega}%
f(x)Ou_{\varepsilon}(t,x)dx=\lim_{\varepsilon\rightarrow0}\int\limits_{\Omega
}u_{\varepsilon}(t,x)Of(x)dx\\
&  =\lim_{\varepsilon\rightarrow0}\int\limits_{\Omega}u_{\varepsilon
}(t,x)Of(x)dx=\lim_{\varepsilon\rightarrow0}\int\limits_{\Omega}%
Of(x)\eta_{\varepsilon}\left(  t-x\cdot\omega\right)  dx\\
&  =\int\limits_{\Omega}Of(x)\delta\left(  t-x\cdot\omega\right)  dx=\left(
\mathcal{R}\left[  f_{O}\right]  \right)  (t,\omega),\quad t<0,\quad\omega\in
Q,
\end{align*}
where $f_{O}(x)\equiv Of(x)$, which proves

%\begin{proposition}
%\label{T:findprojections}Radon projections $\left(  \mathcal{R}\left[
%f_{O}\right]  \right)  (t,\omega)$ of $f_{O}(x)$ in the limited range of
%parameters $t<0,$ $\omega\in Q,$ can be reconstructed from the data $p(t,x)$
%by the formula
%\begin{equation}
%\left(  \mathcal{R}\left[  f_{O}\right]  \right)  (t,\omega)=\int%
%\limits_{\partial Q}\left[  \sum_{j=0}^{2N-1}\sigma_{j}\left(  n(y)\cdot
%\omega^{(j)}\right)  p(y\cdot\omega^{(j)}-t,y)\right]  dy.
%\label{E:findprojections}%
%\end{equation}
%
%\end{proposition}
%
%\begin{corollary}
%\label{T:limited-angles}For a fixed $\omega\in Q$ and $t$ in the range
%$r_{0}<t<0$ the integration in the above formula is actually done over a
%finite subset of $\partial Q,$ i.e. \ for a certain value of $R(\omega)$
%(given by equation (\ref{E:finitesupport}))%
%\begin{equation}
%\left(  \mathcal{R}\left[  f_{O}\right]  \right)  (t,\omega)=\int%
%\limits_{\partial Q\cap B(0,R(\omega))}\quad\left[  \sum_{j=0}^{2N-1}%
%\sigma_{j}\left(  n(y)\cdot\omega^{(j)}\right)
%p(y\cdot\omega^{(j)}-t,y)\right]  dy.
%\notag \end{equation}
%
%\end{corollary}

\begin{proposition}
\label{T:findprojections}Suppose $f(x)$ is a continuous function compactly
supported in $\Omega\subset Q,$ vector $\omega$ lies strictly in $Q$, and $t<0.$ Then, Radon
projections $\left(  \mathcal{R}\left[  f_{O}\right]  \right)  (t,\omega)$ of
$f_{O}(x)$ can be reconstructed from the boundary values of solution
$p(\cdot,y)$ of the problem (\ref{E:IVP}) by the formula
\begin{equation}
\left(  \mathcal{R}\left[  f_{O}\right]  \right)  (t,\omega)=\int%
\limits_{\partial Q}\left[  \sum_{j=0}^{2N-1}\sigma_{j}\left(  n(y)\cdot
\omega^{(j)}\right)  p(y\cdot\omega^{(j)}-t,y)\right]
dy,\label{E:findprojections}%
\end{equation}
where coefficients $\sigma_{j}$ and vectors $\omega^{(j)}$ have the same
values as in (\ref{E:Ustar-sum}) and (\ref{E:Ustar}).
\end{proposition}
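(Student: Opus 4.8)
The plan is to make rigorous the formal chain of identities displayed just above the statement. Fix $t<0$ and a unit vector $\omega$ pointing strictly inside $Q$, and restrict $\varepsilon$ to the range $0<\varepsilon<|t|$ so that the single layer representation~(\ref{E:single-layer-rep}) applies to $U_{\varepsilon,\omega}(t,\cdot)$ on all of $\Omega$. Starting from $\int_{\Omega}f(x)\,U_{\varepsilon,\omega}(t,x)\,dx$, I would substitute the advanced single layer potential~(\ref{E:single-layer-rep}) for $U_{\varepsilon,\omega}$, interchange the $x$-integration with the $(s,y)$-integration over $\partial Q$, and identify the inner integral $\int_{\Omega}f(x)G^{+}(s,y-x)\,dx$ as $P(s,y)$, obtaining~(\ref{E:inner-pr}). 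By Proposition~\ref{T:boundedsupport} the $y$-integration is in fact confined to a bounded subset of $\partial Q$, which, together with the smoothness of $\eta_{\varepsilon}$ and the compact support of $f$, justifies the interchange (and also shows that the integrand of the final formula~(\ref{E:findprojections}) has bounded effective support on $\partial Q$).

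Next I would expand $U_{\varepsilon,\omega}(t+s,y)=\sum_{j=0}^{2N-1}\sigma_{j}\eta_{\varepsilon}(t+s-y\cdot\omega^{(j)})$ and compute its normal derivative on $\partial Q$; since each summand depends on $s$ and $t$ only through $t+s$, one gets $\frac{\partial}{\partial n}U_{\varepsilon,\omega}(t+s,y)=-\frac{\partial}{\partial s}\big[\sum_{j}\sigma_{j}(n(y)\cdot\omega^{(j)})\eta_{\varepsilon}(t+s-y\cdot\omega^{(j)})\big]$. Inserting this into~(\ref{E:inner-pr}) and integrating by parts in $s$ over $[0,\infty)$ transfers the $s$-derivative onto $P$: the endpoint contribution at $s=\infty$ vanishes because $\eta_{\varepsilon}$ is compactly supported, the one at $s=0$ vanishes because $P(0,y)=0$, and $\frac{\partial}{\partial s}P=p$. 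This yields
\[
\int_{\Omega}f(x)\,U_{\varepsilon,\omega}(t,x)\,dx=\int_{\partial Q}\int_{0}^{\infty}p(s,y)\Big[\textstyle\sum_{j=0}^{2N-1}\sigma_{j}\big(n(y)\cdot\omega^{(j)}\big)\,\eta_{\varepsilon}(t+s-y\cdot\omega^{(j)})\Big]\,ds\,dy .
\]
Now let $\varepsilon\to0$. On the right, each inner integral $\int_{0}^{\infty}p(s,y)\,\eta_{\varepsilon}\!\big(s-(y\cdot\omega^{(j)}-t)\big)\,ds$ converges to $p(y\cdot\omega^{(j)}-t,y)$ by~(\ref{E:delta}) and the continuity of $p(\cdot,y)$ (note $p(0,y)=f(y)=0$ for $y\notin\Omega$, so the extension~(\ref{E:negativetime}) is continuous across $s=0$), producing the right-hand side of~(\ref{E:findprojections}). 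On the left, I would apply Proposition~\ref{T:oddcommute}, with $f$ as the compactly supported factor, to rewrite $\int_{\mathbb{R}^{2}}f\,(Ou_{\varepsilon,\omega})\,dx=\int_{\mathbb{R}^{2}}(Of)\,u_{\varepsilon,\omega}\,dx=\int_{\mathbb{R}^{2}}f_{O}(x)\,\eta_{\varepsilon}(t-x\cdot\omega)\,dx$; since $f_{O}$ is continuous and compactly supported, this tends to $\int_{\mathbb{R}^{2}}f_{O}(x)\,\delta(t-x\cdot\omega)\,dx=(\mathcal{R}[f_{O}])(t,\omega)$ by~(\ref{E:Radon}). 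Equating the two limits gives~(\ref{E:findprojections}); since $t<0$ was arbitrary the formula holds for all $t<0$, and the 3D argument is identical with $N=4$ and $\mathbb{R}^{3}$ throughout.

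The substitution of the single layer potential, the Fubini interchange, and the integration by parts are all routine once Proposition~\ref{T:boundedsupport} has supplied a bounded effective domain of integration. The one step that needs genuine care is the passage $\varepsilon\to0$: one must justify moving the limit inside the $y$-integral on the right — a dominated-convergence argument using the uniform-in-$y$ support bound~(\ref{E:finitesupport}) and the local boundedness of $p$ — while on the left it is immediate because $f_{O}$ is a fixed continuous compactly supported function. The borderline case $y\cdot\omega^{(j)}-t=0$ contributes only on a null set and is harmless since $p$ vanishes at $s=0$. I expect this limit-exchange bookkeeping, modest as it is, to be the main obstacle; the rest follows directly from the single layer representation and Proposition~\ref{T:oddcommute}.
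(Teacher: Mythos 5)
Your proposal is correct and follows essentially the same route as the paper's own derivation: substitute the single layer representation~(\ref{E:single-layer-rep}), apply Fubini to identify $P(s,y)$, compute the normal derivative of $U_{\varepsilon,\omega}$, integrate by parts in $s$ to replace $P$ by $p$, pass to the limit $\varepsilon\to0$, and use Proposition~\ref{T:oddcommute} to identify the left-hand side with $(\mathcal{R}[f_{O}])(t,\omega)$. The additional care you take (the restriction $\varepsilon<|t|$, the boundary terms at $s=0$ and $s=\infty$, and the dominated-convergence justification via Proposition~\ref{T:boundedsupport}) merely makes explicit details the paper leaves implicit.
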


\begin{corollary}
\label{T:limited-angles}Suppose function $f(x),$ coefficients $\sigma_{j},$
and vectors $\omega^{(j)}$ and $\omega$ are the same as in the previous
proposition. Suppose, in addition, that region  $\Omega\subset Q$ is contained
within a ball $B(0,r_{0})$ of radius $r_{0}$ and centered at $0$. Then there
is a value $R$ (given by  equation (\ref{E:finitesupport}) with
$\varepsilon=0$) such that the integration in (\ref{E:findprojections})\ can
be restricted to a finite subset of the boundary $\partial Q\cap
B(0,R(\omega))$:%
\begin{equation}
\left(  \mathcal{R}\left[  f_{O}\right]  \right)  (t,\omega)=\int%
\limits_{\partial Q\cap B(0,R(\omega))}\quad\left[  \sum_{j=0}^{2N-1}%
\sigma_{j}\left(  n(y)\cdot\omega^{(j)}\right)  p(y\cdot\omega^{(j)}%
-t,y)\right]  dy,\quad r_{0}<t<0. \label{E:findshortproj}
\end{equation}

\end{corollary}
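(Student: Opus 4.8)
The plan is to take as a starting point the reconstruction formula \ref{E:findprojections} of Proposition \ref{T:findprojections} (which holds under the present hypotheses, since $f$ is continuous and compactly supported in $\Omega\subset Q$, $\omega$ is strictly inside $Q$, and $t<0$), and to show that under the additional assumption $\Omega\subset B(0,r_0)$ its integrand vanishes identically on $\partial Q$ outside a ball whose radius depends only on $\omega$ and $r_0$. Since the integrand equals $\sum_{j=0}^{2N-1}\sigma_j\bigl(n(y)\cdot\omega^{(j)}\bigr)\,p\bigl(y\cdot\omega^{(j)}-t,\,y\bigr)$, and the factors $\sigma_j=\pm1$ and $n(y)\cdot\omega^{(j)}$ are bounded, it suffices, for each $j$, to locate the set of $y\in\partial Q$ on which $p\bigl(y\cdot\omega^{(j)}-t,\,y\bigr)$ can be nonzero.

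First I would record the elementary support bound for $p$. By \ref{E:Green-convolution}, $p(\tau,y)=\frac{\partial}{\partial\tau}\int_\Omega f(x)G^+(\tau,y-x)\,dx$, and in both dimensions $G^+(\tau,z)=0$ for $|z|>\tau$, while $p(\tau,y)=0$ for $\tau\le0$ by \ref{E:negativetime}. Since $\mathrm{supp}\,f\subset B(0,r_0)$, for every $\tau<|y|-r_0$ one has $\tau<|y-x|$ for all $x\in\Omega$, so the integral above vanishes on the open set $\{\tau<|y|-r_0\}$; hence so does its $\tau$-derivative, i.e. $p(\tau,y)=0$ whenever $\tau<|y|-r_0$.

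Next, fix $y\in\partial Q$ and write $y=\zeta|y|$ with $\zeta$ a unit vector of $\partial Q$. Because $\omega$ lies strictly inside $Q$ and, for $j\ge1$, each $\omega^{(j)}$ lies strictly inside a reflection/rotation of $Q$, whereas $\zeta$ lies on $\partial Q$, the dot products $\zeta\cdot\omega^{(j)}$ stay bounded above by a constant $\alpha(\omega)<1$ uniformly over $j\in\{0,\dots,2N-1\}$ and over all unit $\zeta\in\partial Q$; this is exactly an inequality of the type \ref{E:inequ5} used in the proof of Proposition \ref{T:boundedsupport}. Using $-r_0<t<0$, the time argument appearing in \ref{E:findprojections} then satisfies
$$y\cdot\omega^{(j)}-t=|y|\,\bigl(\zeta\cdot\omega^{(j)}\bigr)+(-t)<\alpha(\omega)\,|y|+r_0 .$$
If $|y|>R(\omega)\equiv\dfrac{2r_0}{1-\alpha(\omega)}$, which is precisely \ref{E:finitesupport} with $\varepsilon=0$, then $\alpha(\omega)|y|+r_0<|y|-r_0$, so $y\cdot\omega^{(j)}-t<|y|-r_0$; by the support bound of the preceding paragraph, $p\bigl(y\cdot\omega^{(j)}-t,\,y\bigr)=0$ for every $j$. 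Hence the whole integrand of \ref{E:findprojections} vanishes on $\partial Q\setminus B(0,R(\omega))$, and restricting the integration to $\partial Q\cap B(0,R(\omega))$ gives \ref{E:findshortproj}.

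I do not anticipate a genuine obstacle, as the argument runs parallel to the proof of Proposition \ref{T:boundedsupport}. The two points that deserve care are: the unreflected term $j=0$ must be included in the uniform bound $\alpha(\omega)<1$, which is legitimate because $\omega$ is strictly interior to $Q$ whereas the integration runs over $\partial Q$; and the role of the restriction on $t$, namely that $-t<r_0$ is what keeps $R$ independent of $t$, while $t<0$ is inherited from Proposition \ref{T:findprojections}.
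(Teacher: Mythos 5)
Your proof is correct. The paper itself gives no explicit proof of this corollary; its intended argument is evidently to rerun the derivation of (\ref{E:findprojections}) starting from the truncated single-layer representation (\ref{E:single-layer-rep1}) of Proposition \ref{T:boundedsupport}, i.e.\ to cut off the integration surface \emph{before} pairing with $f$ and sending $\varepsilon\to 0$. You instead truncate \emph{after} the fact: you take the already-established formula (\ref{E:findprojections}) and show its integrand vanishes for $|y|>R(\omega)$ by combining finite speed of propagation (from (\ref{E:Green-convolution}) and (\ref{E:negativetime}), $p(\tau,y)=0$ whenever $\tau<|y|-r_{0}$) with the same geometric bound $\zeta\cdot\omega^{(j)}\le\alpha(\omega)<1$ that underlies (\ref{E:inequ5}). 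The arithmetic is identical in both routes --- $\alpha(\omega)|y|+r_{0}<|y|-r_{0}$ iff $|y|>2r_{0}/(1-\alpha(\omega))$ --- but your packaging has a concrete advantage: it produces the $\varepsilon=0$ radius of (\ref{E:finitesupport}) directly, whereas the route through Proposition \ref{T:boundedsupport} gives the slightly larger radius $(\varepsilon+2r_{0})/(1-\alpha(\omega))$ for each $\varepsilon>0$ and requires a (harmless but unstated) limiting argument to shrink it. You also correctly read the range condition in the statement as $-r_{0}<t<0$ (the printed ``$r_{0}<t<0$'' is a typo) and correctly note that the $j=0$ term is covered because $\omega$ is strictly interior to $Q$ while $y$ ranges over $\partial Q$.
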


%While for a fixed direction $\omega$ the reconstruction of projections is done
%using finite amount of data, bound (\ref{E:finitesupport}) is non-uniform in
%$\omega$: the closer $\omega$ is to the boundary of $\partial Q,$ the larger
%is $R(\omega).$ In other words, reconstruction of projections along lines or
%planes with normals near parallel to the segments of the boundary requires
%data measured very far away from the origin.

In order to find projections for directions of $\omega$ not lying within $Q,$
we observe that the Radon transform of an odd function $Of(x)$ has many
redundancies. First, it is well known \cite{Natterer} and is easily seen from
the definition (\ref{E:Radon}) that Radon transform $\mathcal{R}h$ of a
general function $h$ is redundant in that
\begin{equation}
\left(  \mathcal{R}h\right)  (-t,-\omega)=\left(  \mathcal{R}h\right)
(t,\omega). \label{E:Rad-reddun}%
\end{equation}

Let us show that the Radon transform of an odd function $Of(x)$ has additional
redundancies. In 2D, using Proposition \ref{T:additionalsym2D}, one obtains%

\begin{align}
\left(  \mathcal{R}\left[  f_{O}\right]  \right)  \left(  t,\Upsilon_{2\beta
}^{k}\omega\right)   &  =\lim_{\varepsilon\rightarrow0}\int\limits_{\Omega
}f(x)U_{\varepsilon,\Upsilon_{2\beta}^{k}\omega}(t,x)dx=\lim_{\varepsilon
\rightarrow0}\int\limits_{\Omega}f(x)Ou_{\varepsilon,\omega}\left(
t,\Upsilon_{2\beta}^{-k}x\right)  dx\nonumber\\
&  =\lim_{\varepsilon\rightarrow0}\int\limits_{\Omega}f(x)Ou_{\varepsilon
,\omega}\left(  t,x\right)  dx=\left(  \mathcal{R}\left[  f_{O}\right]
\right)  \left(  t,\omega\right)  , \label{E:addsymproj1}%
\end{align}
and, similarly%
\begin{equation}
\left(  \mathcal{R}\left[  f_{O}\right]  \right)  \left(  t,\mathfrak{R}%
\Upsilon_{2\beta}^{k}\omega\right)  =-\left(  \mathcal{R}\left[  f_{O}\right]
\right)  \left(  t,\omega\right)  . \label{E:addsymproj2}%
\end{equation}

In 3D, with the help of Proposition \ref{T:additionalsym3D}, we observe the
following symmetries%
\begin{equation}
\left\{
\begin{array}
[c]{c}%
\left(  \mathcal{R}\left[  f_{O}\right]  \right)  \left(  t,\mathfrak{R}%
_{i}\omega\right)  =-\left(  \mathcal{R}\left[  f_{O}\right]  \right)  \left(
t,\omega\right) \\
\left(  \mathcal{R}\left[  f_{O}\right]  \right)  \left(  t,\mathfrak{R}%
_{i}\mathfrak{R}_{j}\omega\right)  =\left(  \mathcal{R}\left[  f_{O}\right]
\right)  \left(  t,\omega\right) \\
\left(  \mathcal{R}\left[  f_{O}\right]  \right)  \left(  t,\mathfrak{R}%
_{i}\mathfrak{R}_{j}\mathfrak{R}_{k}\omega\right)  =-\left(  \mathcal{R}%
\left[  f_{O}\right]  \right)  \left(  t,\omega\right)
\end{array}
\right.  \quad i,j,k\in\{1,2,3\}. \label{E:addsymproj3}%
\end{equation}
Therefore, after projections $\left(  \mathcal{R}\left[  f_{O}\right]
\right)  (t,\omega)$ of the function $f_{O}(x)$ are reconstructed using
Proposition \ref{T:findprojections} from the data $p(t,y)$ for all $\omega\in
Q$ \ and all $t<0,$ one can recover projections for other values of $\omega$
using formulas (\ref{E:addsymproj1}) and (\ref{E:addsymproj2}) (in 2D) or
formulas (\ref{E:addsymproj3}) (in 3D). Values of $\left(  \mathcal{R}\left[
f_{O}\right]  \right)  (t,\omega)$ for $t>0$ are then recovered using equation
(\ref{E:Rad-reddun}). For directions of $\omega$ parallel to the lines of
Coxeter cross (in 2D) or to coordinate planes (in 3D), Radon transform
$\left(  \mathcal{R}\left[  f_{O}\right]  \right)  (t,\omega)$ vanishes. This
follows from the fact that $f_{O}$ is odd with respect to reflections about
the lines of Coxeter cross (in 2D) or about the coordinate planes (in 3D).
Therefore, integration over hyperplanes orthogonal to these lines or planes
yields zero. Finally, since both sides of the equation
(\ref{E:findprojections}) are well-defined and continuous at $t=0$, this
formula extends to $t=0$ by continuity.

Thus, we have proven the following

%\begin{theorem}
%\label{T:final} If $f(x)$ is a continuous function whose compact support is a
%proper subset of $Q$, the Radon projections $\left(  \mathcal{R}\left[
%f_{O}\right]  \right)  (t,\omega)$ of the function $f_{O}(x)\equiv Of(x)$ can
%be explicitly reconstructed from the data $p(t,y)$ using \ formulas
%(\ref{E:findprojections})-(\ref{E:addsymproj2}) (in 2D) or formulas
%(\ref{E:findprojections}), (\ref{E:Rad-reddun}) and (\ref{E:addsymproj3}) (in
%3D). The values of the projections for $\omega$ parallel to the lines of Coxeter
%cross (in 2D) or to the coordinate planes (in 3D) are equal to zero.
%\end{theorem}

\begin{theorem}
\label{T:final} Suppose $f(x)$ is a continuous function compactly supported
within a proper subset $\Omega$ of $Q$. Then, the Radon projections $\left(
\mathcal{R}\left[  f_{O}\right]  \right)  (t,\omega),$ $t\in R,$ $\omega
\in\mathbb{S}^{d-1}$, of the function $f_{O}(x)\equiv
Of(x)$ can be explicitly reconstructed from the boundary values of solution
$p(\cdot,y)$ of the problem (\ref{E:IVP})$,$ using \ formulas
(\ref{E:findprojections})-(\ref{E:addsymproj2}) (in 2D), or formulas
(\ref{E:findprojections}), (\ref{E:Rad-reddun}) and (\ref{E:addsymproj3}) (in
3D). If $\omega$ is parallel to the lines of Coxeter cross (in 2D) or to
coordinate planes (in 3D), then $\left(  \mathcal{R}\left[  f_{O}\right]
\right)  (t,\omega)=0,$ $t\in R$.
\end{theorem}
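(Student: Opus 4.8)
The plan is to assemble the theorem from the three ingredients already prepared in this section: the reconstruction formula of Proposition~\ref{T:findprojections}, the symmetry relations of the Radon transform of an odd function, and the oddness of $f_O$ itself. Note first that, since $f$ is continuous and compactly supported in $\Omega\subset Q$, the function $f_O=Of$ is continuous and compactly supported as well, so $(\mathcal{R}[f_O])(t,\omega)$ is a well-defined, jointly continuous function of $(t,\omega)\in\mathbb{R}\times\mathbb{S}^{d-1}$; this will be used repeatedly. Proposition~\ref{T:findprojections} already furnishes $(\mathcal{R}[f_O])(t,\omega)$ via formula~(\ref{E:findprojections}) for every $\omega$ lying strictly inside $Q$ and every $t<0$, and, since both sides of (\ref{E:findprojections}) are continuous at $t=0$, also for $t=0$.

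Next I would extend this to all directions $\omega\in\mathbb{S}^{d-1}$ by a group-orbit argument. The relevant symmetry group is the one generated by the reflections defining $O$: in 2D it is the dihedral group of order $2N$ consisting of the maps $\Upsilon_{2\beta}^{k}$ and $\mathfrak{R}\Upsilon_{2\beta}^{k}$, $k=0,\dots,N-1$, whose $2N$ images of $Q$ are exactly the $2N$ open sectors of the Coxeter cross and tile the circle; in 3D it is $(\mathbb{Z}/2)^{3}$ generated by $\mathfrak{R}_{1},\mathfrak{R}_{2},\mathfrak{R}_{3}$, whose eight images of $Q$ are the eight open coordinate octants and tile $\mathbb{S}^{2}$. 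Hence every unit vector $\omega'$ not parallel to a line of the Coxeter cross (in 2D) or to a coordinate plane (in 3D) equals $T\omega$ for a unique group element $T$ and a unique $\omega\in Q$, and then the identities (\ref{E:addsymproj1})--(\ref{E:addsymproj2}) (in 2D) or (\ref{E:addsymproj3}) (in 3D) express $(\mathcal{R}[f_O])(t,\omega')$ as $\pm(\mathcal{R}[f_O])(t,\omega)$ with an explicit sign. These identities hold for all $t$ (they involve only the $\mathcal{R}[f_O]$ side), so at this point $(\mathcal{R}[f_O])(t,\omega)$ is known for every such $\omega'$ and every $t\le 0$; the standard Radon redundancy (\ref{E:Rad-reddun}), $(\mathcal{R}[f_O])(t,\omega)=(\mathcal{R}[f_O])(-t,-\omega)$, then delivers the values for $t>0$ as well. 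No inconsistency arises in these assignments, since $f_O$ genuinely possesses the stated reflection symmetries, so any two routes to the same pair $(t,\omega')$ produce the same value.

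Finally I would treat the exceptional directions. Suppose $\omega$ is parallel to a line $L$ of the Coxeter cross (in 2D) or to a coordinate plane $\Pi$ (in 3D), and let $\mathfrak{S}$ be the reflection about $L$, respectively about $\Pi$. Then $\mathfrak{S}$ is orthogonal (and symmetric) with $\mathfrak{S}\omega=\omega$, so $(\mathfrak{S}x)\cdot\omega=x\cdot(\mathfrak{S}^{\top}\omega)=x\cdot\omega$; thus $\mathfrak{S}$ maps the hyperplane $\{x:x\cdot\omega=t\}$ onto itself isometrically. Since $f_O$ is odd under $\mathfrak{S}$ (this is precisely the oddness property built into the operator $O$; in 3D it is part of Proposition~\ref{T:3D-refl}), the substitution $x\mapsto\mathfrak{S}x$ in the integral~(\ref{E:Radon}) defining $(\mathcal{R}[f_O])(t,\omega)$ turns it into its own negative, hence it vanishes for every $t$. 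Combined with the previous paragraph this covers all $(t,\omega)\in\mathbb{R}\times\mathbb{S}^{d-1}$, which is the assertion.

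There is no deep analytic difficulty here; the step requiring the most care is the bookkeeping of the reflection-group orbits—checking that the open images of $Q$ exhaust the sphere and that the signs supplied by (\ref{E:addsymproj1})--(\ref{E:addsymproj3}) together with (\ref{E:Rad-reddun}) are mutually consistent—and making sure the continuity argument used to reach $t=0$ agrees with the values produced there by the symmetry relations.
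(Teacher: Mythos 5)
Your proposal is correct and follows essentially the same route as the paper: Proposition~\ref{T:findprojections} for $\omega$ strictly in $Q$ and $t\le 0$ (with the continuity extension to $t=0$), the symmetry relations (\ref{E:addsymproj1})--(\ref{E:addsymproj3}) together with (\ref{E:Rad-reddun}) to propagate the values to the rest of $\mathbb{R}\times\mathbb{S}^{d-1}$, and the oddness of $f_O$ under the relevant reflection to show the vanishing for the exceptional directions. The only difference is that you spell out the orbit/tiling bookkeeping and the change-of-variables argument for the exceptional directions in more detail than the paper does.
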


Finally, once projections $\left(  \mathcal{R}\left[  f_{O}\right]  \right)
(t,\omega)$ are found for all values of $t$ and all values of $\omega$ lying
on the unit circle (in 2D) or on the unit sphere (in 3D), function $f_{O}(x)$
can be reconstructed by application of one of the many known inversion
techniques for the Radon transform (see, for example \cite{Natterer,Kuchment}). As
mentioned above, function $f(x)$ we seek to recover is just a restriction of
$f_{O}(x)$ to $Q$.

In addition, Corollary \ref{T:limited-angles} implies that if data $p(t,y)$
are known only on a finite subset $\partial Q\cap B(0,R(\omega))$ of the
boundary of $Q$,  and $f$ is supported within a ball of radius $r_{0}$
centered at the origin, projections  $\left(  \mathcal{R}\left[  f_{O}\right]
\right)  (t,\omega)$ can be reconstructed for those directions $\omega$ that
point inside $Q$ and satisfy the inequality $R\geq2r_{0}/(1-\alpha(\omega))$,
or%
\begin{equation}
\alpha(\omega)\leq1-\frac{2r_{0}}{R}. \notag
\end{equation}
Parameter $\alpha(\omega)$ in the above inequality was defined in Section
\ref{S:repsinglay} as the upper bound on the dot products of vectors $\omega^{(j)}$
with all unit vectors lying in $\partial Q.$ Due to the symmetries in the set
of $\omega^{(j)}$, $j=0,..2N-1$, and compactness of the set $\partial
Q\cap\mathbb{S}^{d-1},$%
\begin{equation}
\alpha(\omega)=\max_{\zeta=\partial Q\cap\mathbb{S}^{d-1}}(\omega\cdot\zeta). \notag
\end{equation}
Therefore, for fixed $R$ and $r_{0}$, and for
$t>0,$ projections $\left(  \mathcal{R}\left[  f_{O}\right]  \right)
(t,\omega)$ can be reconstructed in the interval of values of $\omega$
satisfying the inequality
\begin{equation}
\max_{\zeta=\partial Q\cap\mathbb{S}^{d-1}}(\omega\cdot\zeta)\leq1-\frac{2r_{0}%
}{R}. \notag
\end{equation}
In 3D, this inequality describes a ''triangle-like" connected region
$T(R,r_{0})$ that is a subset of the set of all unit vectors with positive
coordinates. In 2D, if vector $\omega$ is represented as $(\cos\gamma
,\sin\gamma)$ with $\gamma\in\lbrack0,\beta],$ (recall that $\beta=\pi/N,$
$N=2,3,4,...),$ the above inequality defines the interval $I(R,r_{0}%
)\equiv\lbrack\gamma_{0},\beta-\gamma_{0}]$, where $\gamma_{0}=\arccos\left(
1-2r_{0}/R\right)  .$ By using the symmetries expressed by equations
(\ref{E:Rad-reddun})-(\ref{E:addsymproj3}), one then finds values of $\left(
\mathcal{R}\left[  f_{O}\right]  \right)  (t,\omega)$ that corresponds to
rotations and/or reflections of $I(R,r_{0})$ or $T(R,r_{0}),$ and to
nonnegative values of $t.$

\begin{theorem}
\label{T:veryfinal}
Suppose $f(x)$ is a continuous function compactly supported within a proper
subset $\Omega$ of $Q\cap B(0,r_{0})$, and boundary values of solution
$p(\cdot,y)$ of the problem (\ref{E:IVP}) are known for all $y\in\partial
Q^{\ast}\equiv\partial Q\cap B(0,R),$where $r_{0}$ and $R$ are arbitrary (but
with $R>2r_{0})$.  Then, in 3D the Radon projections $\left(  \mathcal{R}%
\left[  f_{O}\right]  \right)  (t,\omega)$ of the function $f_{O}(x)\equiv
Of(x)$ can be explicitly reconstructed from $p(\cdot,y),$ $y\in\partial
Q^{\ast},$ for all values of $\omega=(\omega_{1},\omega_{2},\omega_{3})$ such
that $\max\limits_{j=1,2,3}|\omega_{j}|\leq1-\frac{2r_{0}}{R},$ and all $t$
with $|t|\leq r_{0},$ using equations (\ref{E:findshortproj}),
(\ref{E:Rad-reddun}) and (\ref{E:addsymproj3}). In 2D, the Radon projections
$\left(  \mathcal{R}\left[  f_{O}\right]  \right)  (t,\omega(y))$ of the
function $f_{O}(x)\equiv Of(x)$ can be explicitly reconstructed from
$p(\cdot,y),$ $y\in\partial Q^{\ast},$ for all $\gamma\in\bigcup
\limits_{k=0}^{2N-1}[\gamma_{0}+\beta_{k},\beta-\gamma_{0}+\beta k],$ and all
$t$ with $|t|\leq r_{0},$ using formulas (\ref{E:findshortproj}) and
(\ref{E:Rad-reddun})-(\ref{E:addsymproj2}). (In all dimensions  $\left(
\mathcal{R}\left[  f_{O}\right]  \right)  (t,\omega)=0$ for $t>r_{0}.)$
\end{theorem}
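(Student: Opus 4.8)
The plan is to assemble Theorem \ref{T:veryfinal} from pieces already in hand: the finite–data reconstruction formula of Corollary \ref{T:limited-angles} together with the redundancy relations (\ref{E:Rad-reddun}) and (\ref{E:addsymproj1})--(\ref{E:addsymproj3}). One preliminary remark disposes of the last sentence: the reflections and rotations entering the operator $O$ are isometries fixing the origin, so $\operatorname{supp}f_{O}$ is a compact subset of $B(0,r_{0})$; hence the hyperplane $\{x\cdot\omega=t\}$ misses $\operatorname{supp}f_{O}$ whenever $|t|>r_{0}$, and $\left(\mathcal{R}\left[f_{O}\right]\right)(t,\omega)=0$ there.

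First I would record which directions Corollary \ref{T:limited-angles} covers for the given, fixed $R$. For $\omega$ pointing strictly into $Q$ it reconstructs $\left(\mathcal{R}\left[f_{O}\right]\right)(t,\omega)$ for $-r_{0}<t<0$ from $p(\cdot,y),\ y\in\partial Q\cap B(0,R)$, provided $R\ge 2r_{0}/(1-\alpha(\omega))$, i.e. $\alpha(\omega)\le 1-2r_{0}/R$; here the integral in (\ref{E:findshortproj}) over $\partial Q\cap B(0,R)$ may be used in place of the one over $\partial Q\cap B(0,R(\omega))$ because, as in the proof of Proposition \ref{T:boundedsupport}, the inner $s$–integral vanishes for $|y|>R(\omega)$ from (\ref{E:finitesupport}). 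The next step is the elementary evaluation of $\alpha(\omega)=\max_{\zeta\in\partial Q\cap\mathbb{S}^{d-1}}(\omega\cdot\zeta)$. In 2D, with $\omega=(\cos\gamma,\sin\gamma)$ and $\partial Q\cap\mathbb{S}^{1}=\{e_{1},\Upsilon_{\beta}e_{1}\}$, this equals $\max(\cos\gamma,\cos(\beta-\gamma))$, so the bound becomes exactly $\gamma\in[\gamma_{0},\beta-\gamma_{0}]$ with $\gamma_{0}=\arccos(1-2r_{0}/R)$. In 3D, maximizing $\omega\cdot\zeta$ over the three coordinate quarter–circles gives $\sqrt{1-\min_{j}\omega_{j}^{2}}$, and the condition $\alpha(\omega)\le 1-2r_{0}/R$ cuts out the ``triangle–like'' region $T(R,r_{0})$ inside the positive octant; unwinding it gives the inequality quoted in the theorem. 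Continuity of both sides of (\ref{E:findshortproj}) in $t$ then extends the base reconstruction up to $t=0$.

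Next I would propagate these base reconstructions to all admissible directions and to positive $t$ by means of the redundancies. Every admissible $\omega\in\mathbb{S}^{d-1}$ is a symmetry image of some $\omega_{0}$ in the base region: in 3D, $\omega=\mathfrak{R}_{I}\omega_{0}$ for some $I\subseteq\{1,2,3\}$ (the eight sign patterns), and (\ref{E:addsymproj3}) gives $\left(\mathcal{R}\left[f_{O}\right]\right)(t,\omega)=(-1)^{|I|}\left(\mathcal{R}\left[f_{O}\right]\right)(t,\omega_{0})$; in 2D, $\omega$ lying in the $m$–th of the $2N$ sectors is $\Upsilon_{2\beta}^{k}\omega_{0}$ for even shifts, via (\ref{E:addsymproj1}), or $\mathfrak{R}\Upsilon_{2\beta}^{k}\omega_{0}$ for odd shifts, via (\ref{E:addsymproj2}), with $\gamma(\omega_{0})\in[\gamma_{0},\beta-\gamma_{0}]$, which produces precisely $\gamma\in\bigcup_{m=0}^{2N-1}[\gamma_{0}+m\beta,\beta-\gamma_{0}+m\beta]$. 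Finally, for $0<t\le r_{0}$ I would invoke $\left(\mathcal{R}\left[f_{O}\right]\right)(t,\omega)=\left(\mathcal{R}\left[f_{O}\right]\right)(-t,-\omega)$ from (\ref{E:Rad-reddun}): $-\omega$ is again admissible (the total reflection $\mathfrak{R}_{1}\mathfrak{R}_{2}\mathfrak{R}_{3}\omega$ in 3D, the direction shifted by $N\beta$ among the $2N$ sectors in 2D), so the previous steps apply at time $-t\in[-r_{0},0)$, with the sign supplied by (\ref{E:addsymproj3}) in 3D or by the sector parity in 2D.

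The only place needing genuine care is the combinatorial/geometric bookkeeping of the last two steps: verifying that the union of symmetry images of the base region is exactly the set named in the theorem, that the group element carrying the base region to a given admissible $\omega$ keeps $\omega$ within the allowed range, and that the accumulated sign factors are consistent across the chain of relations used. All the analytic substance is already contained in Corollary \ref{T:limited-angles}, Proposition \ref{T:boundedsupport}, and the symmetry identities (\ref{E:Rad-reddun})--(\ref{E:addsymproj3}); nothing beyond careful assembly is required.
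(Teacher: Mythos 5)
Your proposal is correct and follows essentially the same route as the paper: Theorem \ref{T:veryfinal} is obtained there exactly as you describe, by combining Corollary \ref{T:limited-angles} with the explicit evaluation $\alpha(\omega)=\max_{\zeta\in\partial Q\cap\mathbb{S}^{d-1}}(\omega\cdot\zeta)$ (yielding the base set $I(R,r_{0})$ in 2D and the ``triangle-like'' region $T(R,r_{0})$ in 3D) and then propagating by the redundancies (\ref{E:Rad-reddun})--(\ref{E:addsymproj3}); your sign and sector bookkeeping only makes explicit what the paper leaves implicit. One caveat worth flagging: in 3D the condition $\alpha(\omega)\leq 1-2r_{0}/R$ reads $\sqrt{1-\min_{j}\omega_{j}^{2}}\leq 1-2r_{0}/R$, which is strictly stronger than the inequality $\max_{j}|\omega_{j}|\leq 1-2r_{0}/R$ appearing in the theorem statement, so your assertion that the computation ``unwinds'' to the quoted inequality is not literally accurate --- though this imprecision is inherited from the paper's own description of $T(R,r_{0})$ rather than introduced by you.
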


\begin{figure}[t]
\begin{center}
\subfigure[Phantom $f(x)$]{  \phantom{aaaa}
\includegraphics[width=1.6in,height=1.6in]{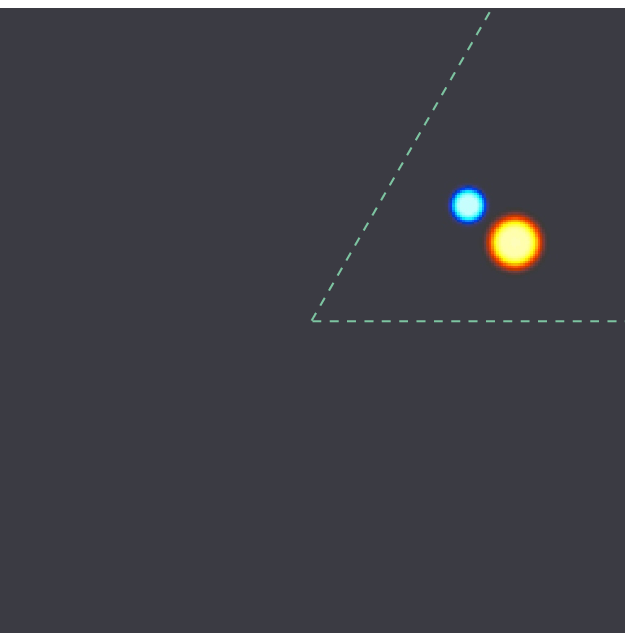} \phantom{aaaa}}
\subfigure[Corresponding $f_O(x)$, $N=3$]{        \phantom{aaaa}
\includegraphics[width=1.6in,height=1.6in] {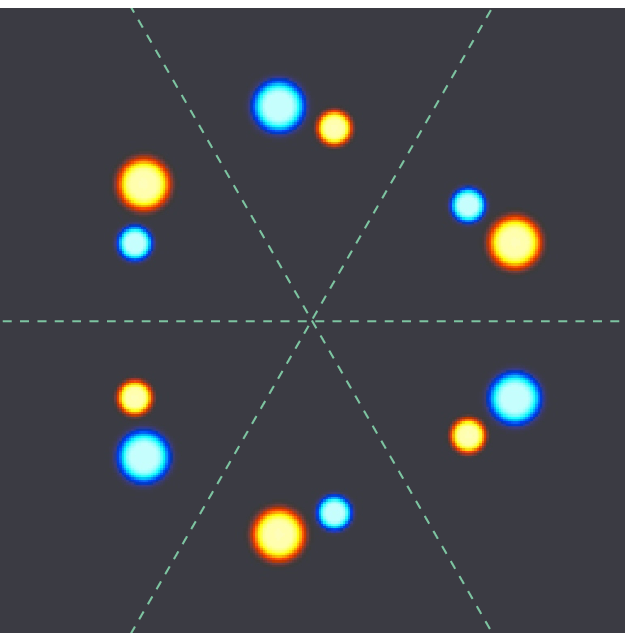}
\includegraphics[width=0.32in,height=1.6in]{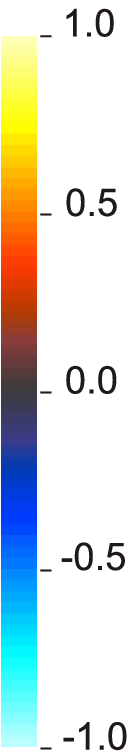}} \\
\subfigure[Projections $\left( \mathcal{R} f_{O} \right) (t,\omega(\theta))$]{
\includegraphics[width=2.35in,height=1.6in]{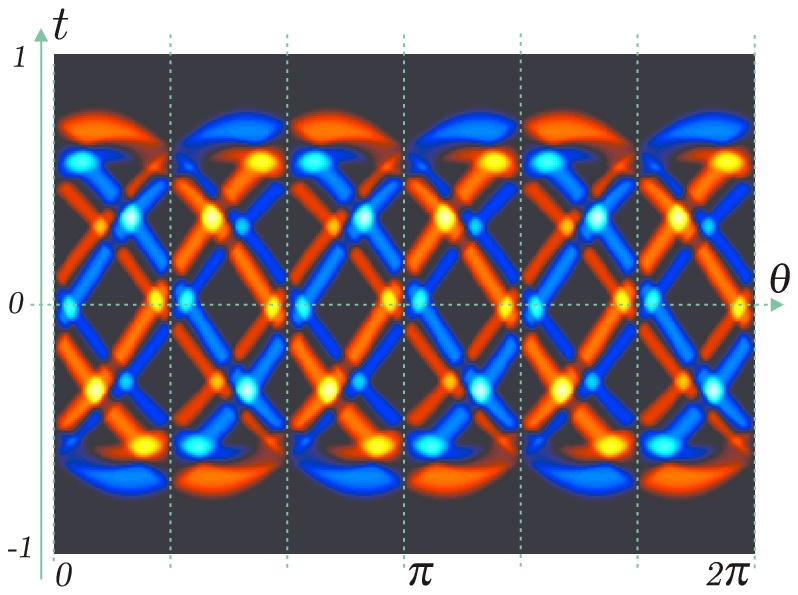}
\includegraphics[width=0.32in,height=1.6in]{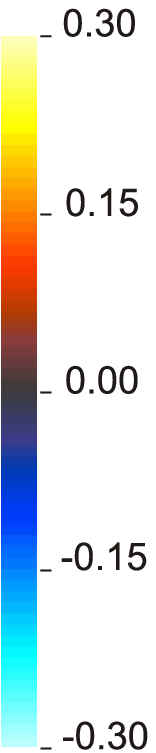}}
\subfigure[Error in the reconstructed projections]{ \phantom{aa}
\includegraphics[width=2.35in,height=1.6in]{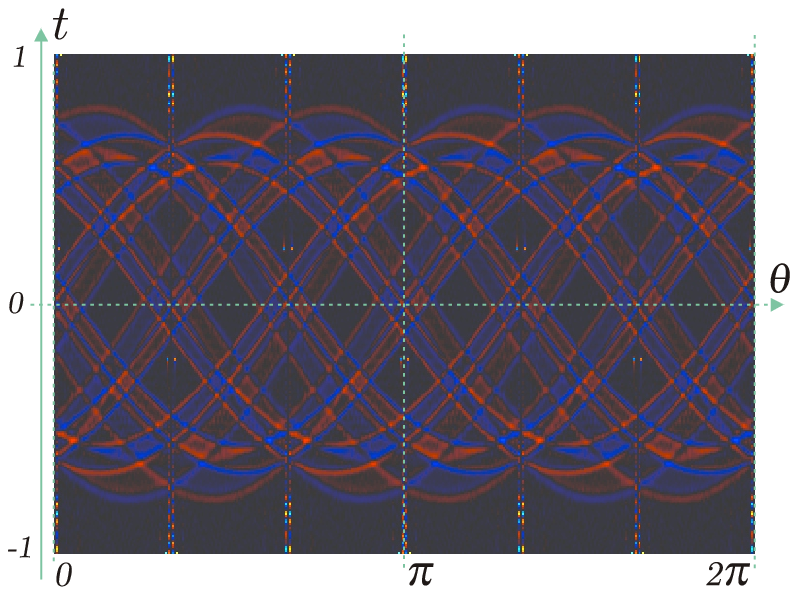}
\includegraphics[width=0.32in,height=1.6in]{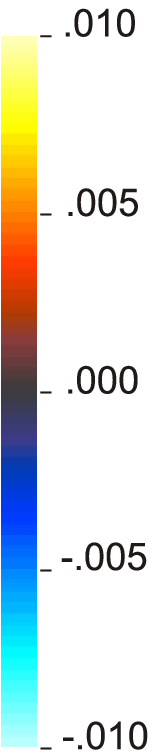}}
\subfigure[Reconstruction from $p(t,y)$ with $|y|\le 300$]{ % \phantom{aa}
\includegraphics[width=2.35in,height=1.6in]{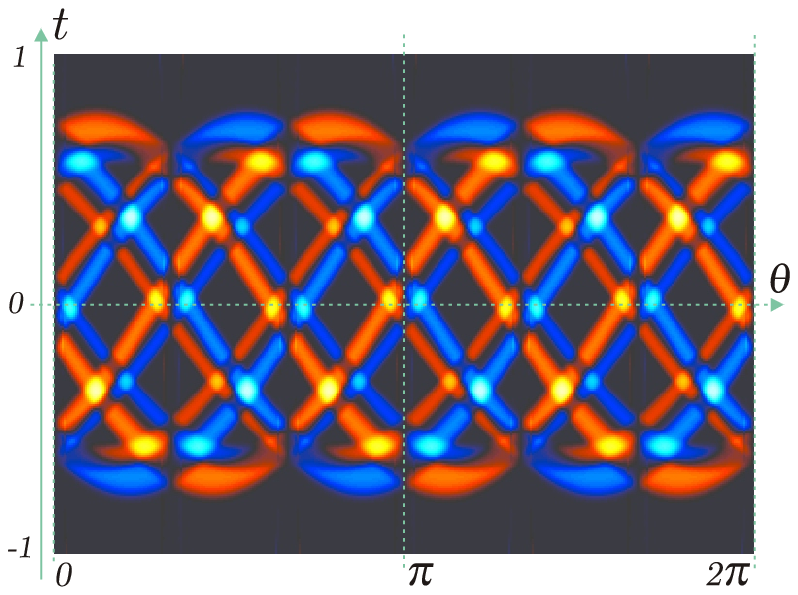}
\includegraphics[width=0.32in,height=1.6in]{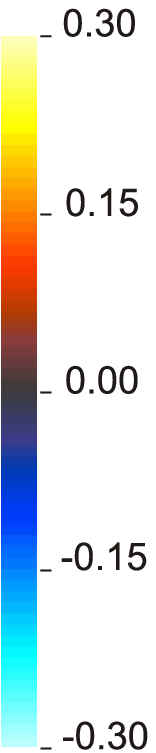}}
\subfigure[Reconstruction from noisy data, $|y|\le 300$]{ \phantom{a}
\includegraphics[width=2.35in,height=1.6in]{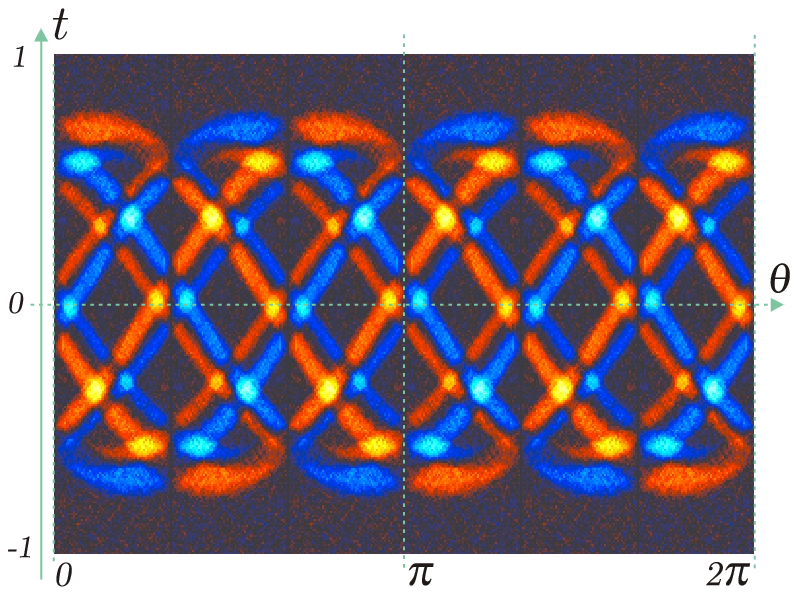}
\includegraphics[width=0.32in,height=1.6in]{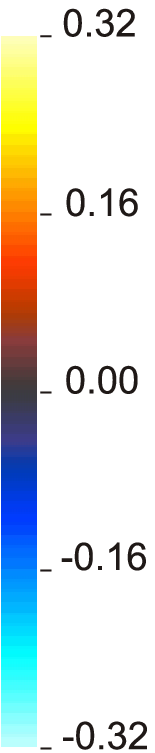}}

\end{center}
\caption{Example of reconstructing projections from $p(t,y)$ in 2D, $N=3$}
\label{F:example2D}
\end{figure}

\section{Numerical simulations}

In this section our formulas for reconstructing Radon projections are tested
in two numerical simulations, one in 2D and the other one in 3D. The goal of
these numerical experiments is to illustrate the exactness of the formulas,
rather than to model realistic thermo- or photo- acoustic reconstruction. To
this end, we did not simulate measurement noise, and did not study the effects
of truncation of the acquisition surfaces. Moreover, when modeling the forward
problem, an effort was made to sample solution of the wave equation on a grid
covering the whole infinite boundary~$\partial Q$. Clearly, this cannot be
done numerically on a uniform grid. We, thus, used grids that are uniform on
parts of the boundary relatively close to the origin (e.g. $|y|$ is less than
several hundreds), and that are getting progressively coarser for larger
values of $|y|.$

As stated by Corollary \ref{T:limited-angles}, for a particular direction of
$\omega$ integration is done only over a finite subset of $\partial Q.$ The
remote parts of $\partial Q$ are only needed for computing projections with
$\omega$ near parallel to one of the parts of the boundary. Since remote parts
of $\partial Q$ were sampled very coarsely, values of reconstructed projections
corresponding to such directions of $\omega$ were less accurate.

\subsection{A corner-like domain in 2D}

Consider a problem of reconstructing Radon projections from solution of the
wave equation known on $\partial Q$ consisting of two rays with angle
$\beta=\pi/3$ between them. \ This geometry corresponds to a subset of Coxeter
cross with $N$ equal to $3.$ As a phantom $f(x)$, we use a linear combination
$f(x)=h_{1}(x)-h_{2}(x)$ of functions $h_{j},$ $j=1,2,$ in the form%
\begin{equation}
h_{j}=h\left(  \frac{|x-x_{j}|}{r_{j}}\right)  , \label{E:phantom}%
\end{equation}
where $h(s)$ is a smooth function with a maximum equal to 1, at $s=0$, and
vanishing with several of its derivatives at $s=1.$ This phantom is plotted in
figure \ref{F:example2D}(a). The odd part $f_{O}(x)$ of the function $f(x)$ is
defined by formula (\ref{E:odd-oper}); it is shown in figure \ref{F:example2D}%
(b).
%Explicit expressions for vectors $\omega_{j}$ and factors $\sigma_{j}$
%entering formula (\ref{E:findprojections}) (with $\beta = \pi/3$) are presented in the following
%table:%
%\begin{equation}%
%\begin{tabular}
%[c]{|l|l|r|}\hline
%$j$ & $\sigma_{j}$ & $\omega_{j}(\theta),\quad\theta\in(0,\pi/3)$%
%\\\hline\hline
%0 & 1 & $\omega_{0}=\omega=(\cos(\theta),\sin(\theta))$\\\hline
%1 & 1 & $(\cos(\theta-2\beta),\sin(\theta-2\beta))$\\\hline
%2 & 1 & $(\cos(\theta+2\beta),\sin(\theta+2\beta))$\\\hline
%3 & -1 & $(\cos(\theta),-\sin(\theta))$\\\hline
%4 & -1 & $(\cos(\theta-2\beta),-\sin(\theta-2\beta))$\\\hline
%5 & -1 & $(\cos(\theta+2\beta),-\sin(\theta+2\beta))$\\\hline
%\end{tabular}
%\
%\notag \end{equation}

Values of $p(t,y)$\ corresponding to the chosen phantom $f(x)$ were computed
on $\partial Q$ by first computing circular averages $M(y,r)$ defined by the
equation
\begin{equation}
M(y,r)\equiv\int\limits_{\mathbb{S}^{1}}\frac{f(y+r\alpha)}{\sqrt
{t^{2}-|r|^{2}}}d\alpha,
\notag \end{equation}
and by exploiting a well known connection between $p(t,y)$ and $M(y,r):$%
\begin{align*}
p(t,y)  &  =\frac{\partial}{\partial t}\int\limits_{\Omega}f(x)G^{+}%
(t,y-x)dx=\frac{\partial}{\partial t}\int\limits_{\Omega}f(y+z)G^{+}(t,z)dz\\
&  =\frac{1}{2\pi}\frac{\partial}{\partial t}\int\limits_{t}^{\infty}%
\int\limits_{\mathbb{S}^{1}}\frac{f(y+r\alpha)}{\sqrt{t^{2}-|r|^{2}}}%
d\alpha\ rdr=\frac{1}{2\pi}\frac{\partial}{\partial t}\int\limits_{t}^{\infty
}\frac{M(y,r)}{\sqrt{t^{2}-|r|^{2}}}\ rdr.
\end{align*}
%The Abel transform of $M(y,r)$ and the time derivative in the above equation
%were computed numerically.
The Abel transform of $M(y,r)$ and its time derivative
were computed numerically.

\begin{figure}[t]
\begin{center}
\subfigure[Phantom, $z=0.3$]{
\includegraphics[width=1.6in,height=1.6in]{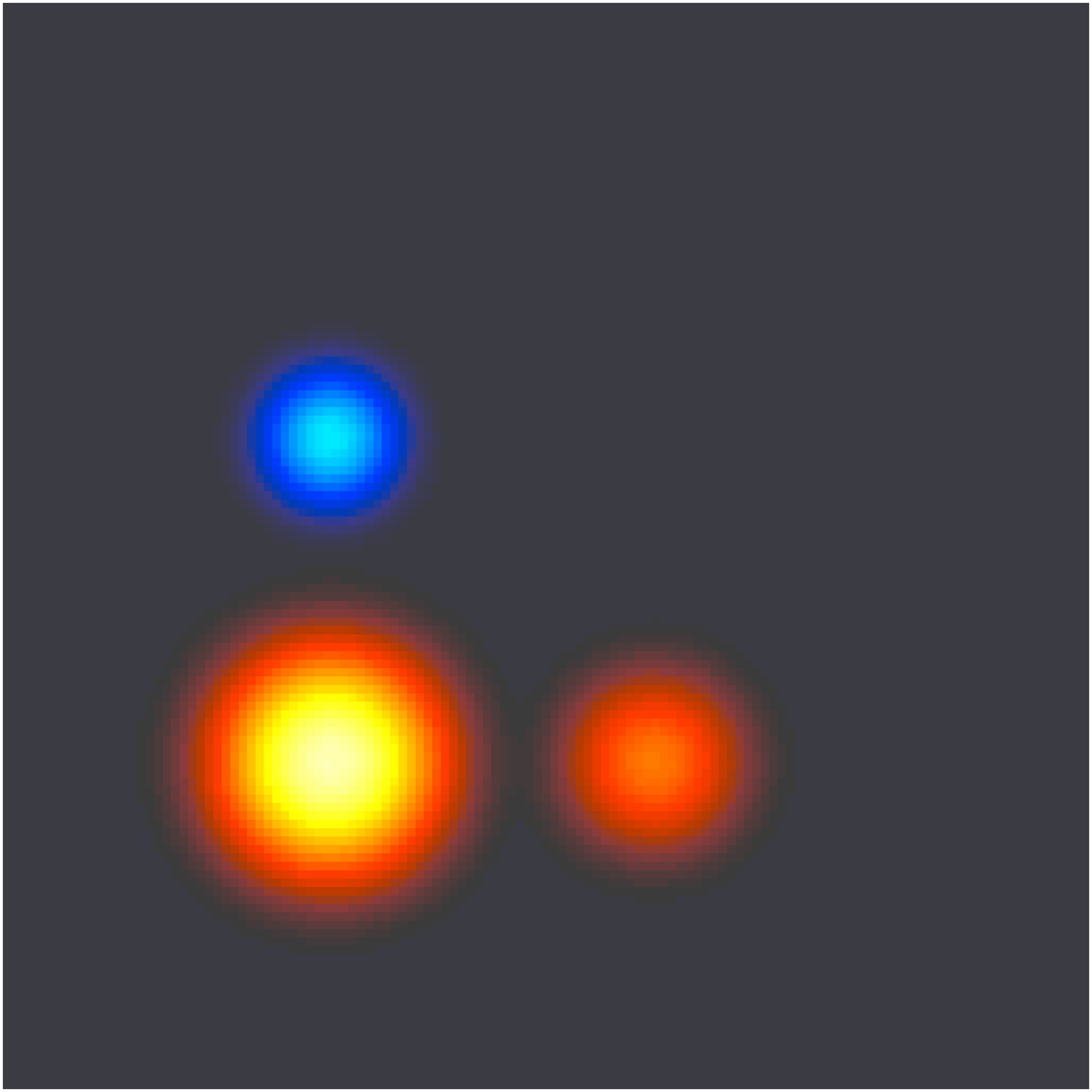} }
\subfigure[Phantom, $z=0.6$]{
\includegraphics[width=1.6in,height=1.6in] {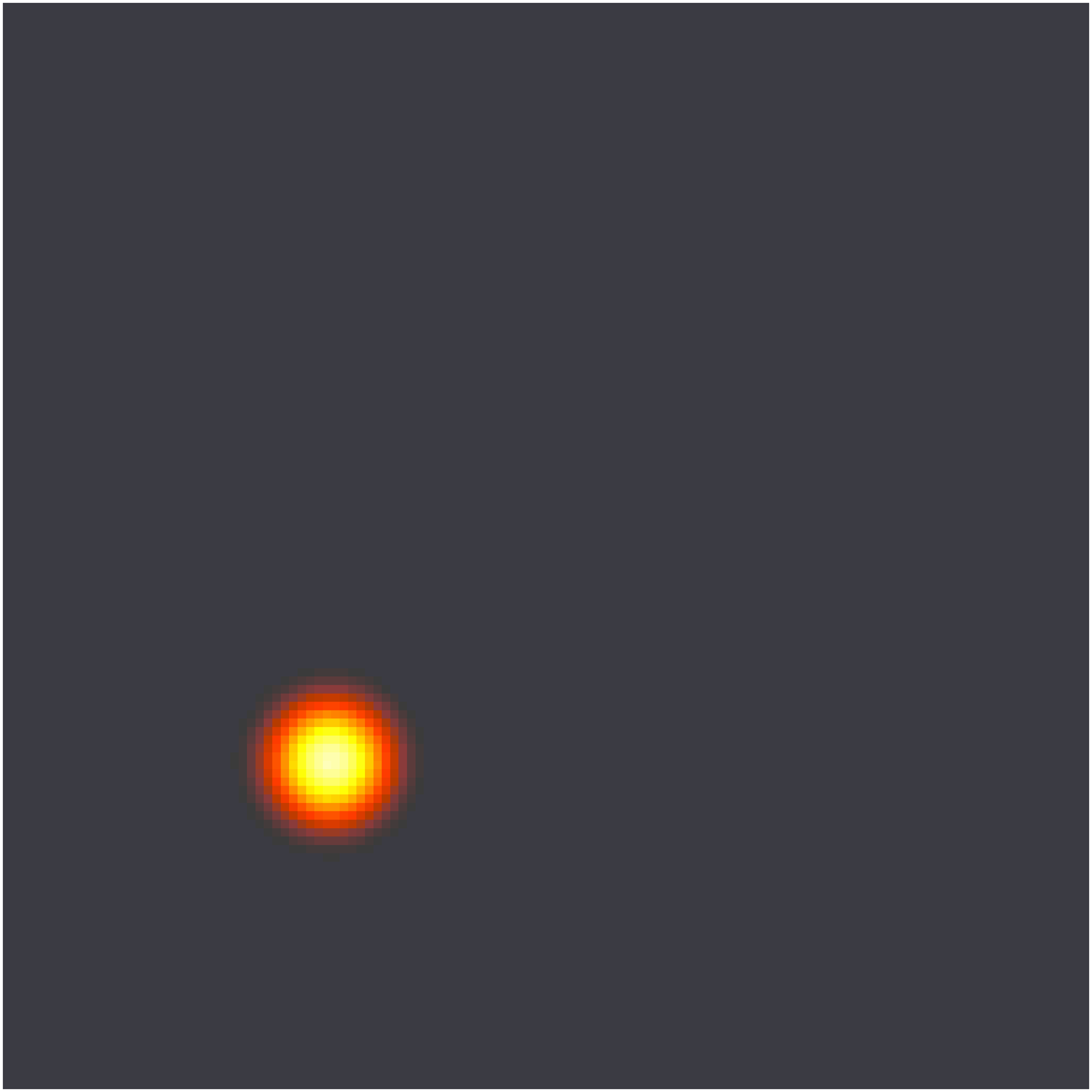}
\includegraphics[width=0.32in,height=1.6in]{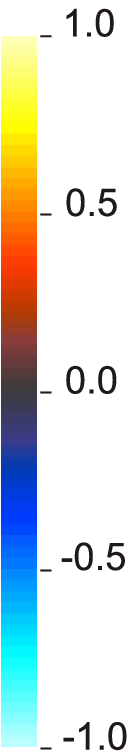} }
\subfigure[Projections $\left( \mathcal{R} f_{O} \right) (t,\omega(\theta,\varphi_0))$]{
\includegraphics[width=2.20in,height=1.6in]{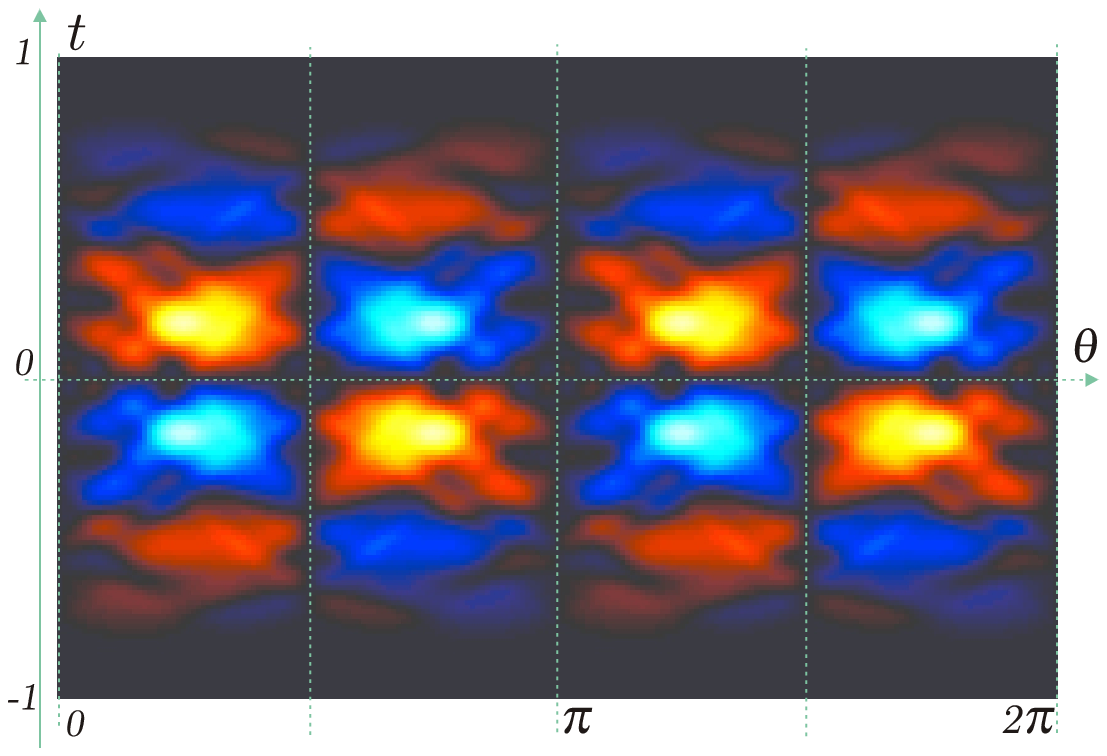}}\\
\subfigure[Projections on $(-1,0)\times (0,\pi/2)$, $\varphi=\varphi_0$]{ \phantom{aaa}
\includegraphics[width=1.9in,height=2.2in]{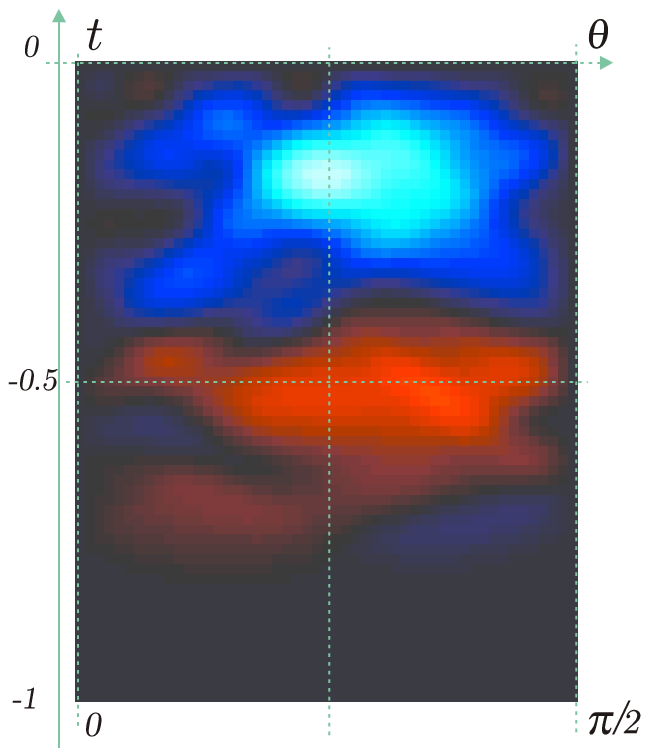}
\includegraphics[width=0.4in,height=2.2in]{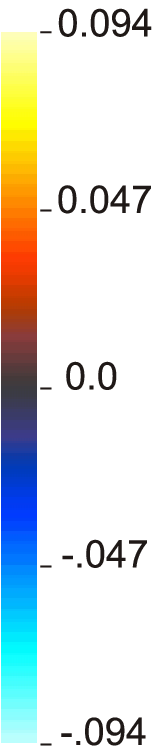} \phantom{aaa} }
\subfigure[Recon. error on $(-1,0)\times(0,\pi/2)$, $\varphi=\varphi_0$]{  \phantom{aa}
\includegraphics[width=1.9in,height=2.20in]{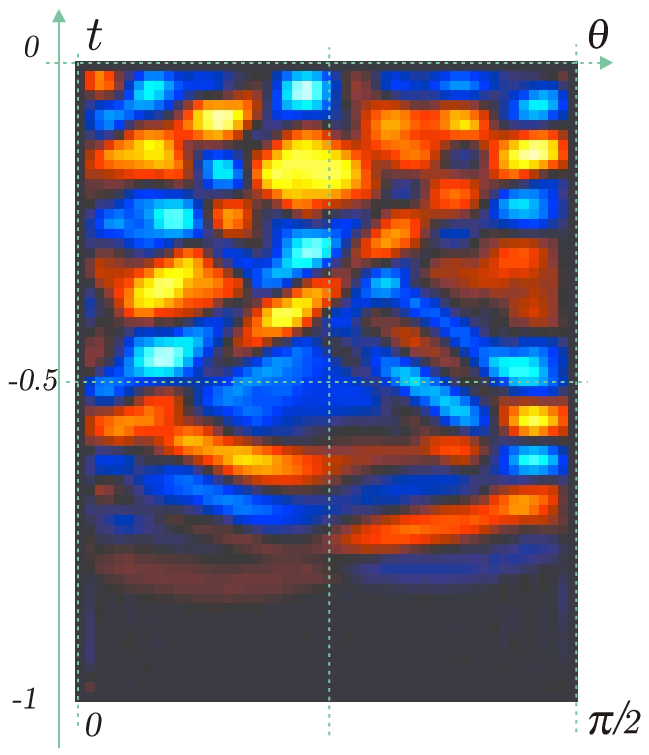}
\includegraphics[width=0.4in,height=2.2in] {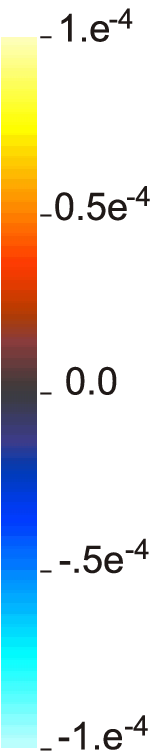}  \phantom{aaaa}}
\end{center}
\caption{Reconstructing projections $\left( \mathcal{R} f_{O} \right) (t,\omega(\theta,\varphi))$ from values of $p(t,y)$ known  on $\partial Q$
in 3D}
\label{F:example3D}
\end{figure}

% figure 4 here !!!!!!!!!!!!!!!!!!!!!!!!!!!

For further comparison, the Radon projections $\left(  \mathcal{R}\left[
f_{O}\right]  \right)  (t,\omega(\theta))$ of $f_{O}(x)$ were computed first
using the exact $f_{O}(x).$The result is shown in figure~\ref{F:example2D}(c).
One can clearly see in this figure symmetries described by equations
(\ref{E:Rad-reddun})-(\ref{E:addsymproj2}). Next, we reconstructed the values
of projections from $p(t,y)$ using formulas (\ref{E:findprojections}%
)-(\ref{E:addsymproj2}). On a gray scale (or color) plot the result is
indistinguishable from the one in figure~\ref{F:example2D}(c). \ The plot of
the difference between the reconstructed and known projections is shown in
figure~\ref{F:example2D}(d). One can notice that the error for values of
$\theta$ close to $\frac{\pi k}{3},$ $k\in\mathbb{Z}$, is larger than for the
rest of these values. An explanation was presented right before the start of
this section. In an $L_{\infty}$-norm, the relative error over the whole range
of values $t$ and $\theta$ is about 3\%. If values of $\theta$ close to
$\frac{\pi k}{3}$ are excluded from the comparison, this error drops to about
1\%. At least a part of this error is due to the error in computing $p(t,y)$
by a three-step process involving finding circular averages, computing the
Abel transform, and subsequent numerical differentiation.

In figure~\ref{F:example2D}(e) we present an image reconstructed from data
$p(t,y)$ known only for values $|y| \le 300$. According to Theorem~\ref{T:veryfinal}
applied with values $r_0=1$, $R=300$, our formulas yield exact reconstruction only
for values of $\omega(\gamma)$ with $ \gamma \in\bigcup
\limits_{k=0}^{5}[6.62 ^\circ + k 60 ^\circ, (k+1) 60 ^\circ  - 6.62 ^\circ]$.
If one examines the image in part~(e) only within this set of
values, the error is similar to that shown in figure~\ref{F:example2D}(d).
For other values of $\gamma$ the error is larger, and can be noticed
in a high-resolution version of the figure.

Figure~\ref{F:example2D}(f) demonstrates reconstruction from
noisy (and truncated in $y$) data. A $100\%$ noise (in $L_2$ norm) was added
to values $p(t,y)$ at each point $y$ with $|y| \le 300$, on the time intervals
of length 2 starting at the time of arrival of the acoustic waves to point $y$.
The effect of noise is noticeable in the figure; the relative $L_\infty$ norm
of the error is about $25\%$. This error, however, is quite small taking
into account the high level of noise in the data --- as one would expect
in view of the highly stable nature of our reconstruction formulas.

\subsection{Boundary of an octet in 3D}

In the second numerical experiment we reconstructed Radon projections of a
function from solution of the wave equation known on a boundary $\partial Q$
of the first octet in 3D. As a phantom $f(x)$ defined within the intersection
of the unit ball and the first octet, we used a linear combination of four
functions in the form (\ref{E:phantom}). The centers $x_{j}$ of these
functions lied either in the plane $z=0.3$ or plane $z=0.6.$ The
cross-sections of the phantom by these two planes are shown in
figure~\ref{F:example3D}, parts (a) and (b). Projections $\left(
\mathcal{R}\left[  f_{O}\right]  \right)  (t,\omega)$ of function $f_{O}(x)$
with $\omega$ parametrized in spherical coordinates (i.e., $\omega
=(\sin\varphi\cos\theta,\sin\varphi\sin\theta,\cos\varphi))$ are shown in
figure~\ref{F:example3D}(c) for a fixed value of $\varphi=\varphi
_{0}\thickapprox53^{\circ}$. Symmetries expressed by equations
(\ref{E:Rad-reddun}) and (\ref{E:addsymproj3}) can be clearly seen in this
figure. A magnified image of projections restricted to the region
$(t,\theta)\in\lbrack-1,0]\times\lbrack0,\pi/2]$ with $\varphi=\varphi_{0}$ is
shown in figure~\ref{F:example3D}(d).

Modeling the forward problem (i.e., computing values of $p(t,y)$ on $\partial
Q)$ is actually simpler in 3D than in 2D. For radial functions in the form
(\ref{E:phantom}) there exists an explicit exact formula \cite{Haltm-series}
yielding values of $p(t,y).$ We precomputed these values on polar grids
defined on each of three infinite faces constituting $\partial Q.$ The
discretization step in the radial direction was uniform for relatively small
values of the radius but became increasingly coarser for larger values.

%Values of vectors $\omega_{j}$ and factors $\sigma_{j}$ entering formula
%(\ref{E:findprojections}) are presented in the following table assuming that
%$\omega=(\omega_{1},\omega_{2},\omega_{3})\in Q$:%
%\begin{equation}%
%\begin{tabular}
%[c]{|l|l|r|}\hline
%$j$ & $\sigma_{j}$ & $\omega_{j}$\\\hline\hline
%0 & 1 & $\omega_{0}=\omega=(\omega_{1},\omega_{2},\omega_{3})$\\\hline
%1 & -1 & $(-\omega_{1},\omega_{2},\omega_{3})$\\\hline
%2 & -1 & $(\omega_{1},-\omega_{2},\omega_{3})$\\\hline
%3 & -1 & $(\omega_{1},\omega_{2},-\omega_{3})$\\\hline
%4 & 1 & $(-\omega_{1},-\omega_{2},\omega_{3})$\\\hline
%5 & 1 & $(-\omega_{1},\omega_{2},-\omega_{3})$\\\hline
%6 & 1 & $(\omega_{1},-\omega_{2},-\omega_{3})$\\\hline
%7 & -1 & $(-\omega_{1},-\omega_{2},-\omega_{3})$\\\hline
%\end{tabular}
%\notag \end{equation}

Numerical reconstruction of projections from $p(t,y)$ performed using formulas
(\ref{E:findprojections}), (\ref{E:Rad-reddun}) and (\ref{E:addsymproj3}) in
the region $(t,\theta)\in\lbrack-1,0]\times\lbrack0,\pi/2]$ yields data that
are very close to the exact ones for most values of $\varphi.$ The results
obtained for $\varphi=\varphi_{0}$ are indistinguishable from the exact values
on a gray-scale or color plot. The error for $\varphi=\varphi_{0}$ is plotted
in figure~\ref{F:example3D}(e); the relative error in $L_{\infty}$ norm is
about 0.1\%. Similar error is observed for wide range of values of $\varphi$
except those close to $0$ or $\pi/2$. We explain a better accuracy obtained in
3D simulation by a better accuracy in modeling $p(t,y)$: in the 3D case these
values are given by a theoretically exact formula.

\section{Concluding remarks and acknowledgement}

We presented explicit formulas that reconstruct Radon projections of $f_{O}$
from the values of the
%the corresponding
wave equation known on a boundary of an
angular domain with $\pi/N$, $N=2,3,4,..$ (in 2D) or on the boundary of octet
$Q$ (in 3D). Here are some additional thoughts:

\begin{itemize}
\item Since the solution of the wave equation in the case of constant speed of
sound can be expressed through the spherical/circular means (and vice versa),
finding Radon projections from these means is straightforward.

\item Explicit relations between the Radon transform and the
spherical/circular means were found in \cite{Beltukov} in the case when the
measurement surface is a hyperplane. The present results may be considered a
generalization of that work, although the case of a hyperplane in 2D or 3D is
not presented in our paper.

\item General approach of this paper, as well as that of~\cite{Beltukov}, is
somewhat similar to the approach of Popov and Sushko~\cite{PS1,PS2} who
proposed a formula for finding Radon projections of a function from
optoacoustic measurements. Their formula, however, yields
a parametrix of the problem rather than the exact solution.

\item Theorem~\ref{T:final} can be easily generalized to the case when the
acquisition surface (in 3D) is the boundary of an infinite region bounded by
three planes, with two of these planes intersecting at the angle $\pi/N$,
$N=3,4,5...$ and the third plane perpendicular to the first two.
%For the lack of space we have to leave such generalization as an exercise to the reader.
\end{itemize}

\textbf{Acknowledgement:} The author's research is partially supported by the
NSF, award NSF/DMS-1211521.

\end{document}